\newtheorem{lem}{Lemma}[section]
\newtheorem{thm}[lem]{Theorem}
\newtheorem{rem}[lem]{Remark}
\newtheorem{defin}[lem]{Definition}
\newtheorem{cor}[lem]{Corollary}
\newtheorem{example}[lem]{Example}
\newtheorem{prop}[lem]{Proposition}
\def \NN{\mathbb{N}}
\def \RR{\mathbb{R}}
\def \Rd{{\RR^d}}
\newcommand{\ind}{{\bf 1 }}
\definecolor{zm}{RGB}{128,128,0}
\newcommand{\Kz}{K}
\title[Sharp Gaussian estimates]{Sharp Gaussian estimates for heat kernels of~Schr{\"o}dinger operators}
\subjclass[2010]{Primary 47D06, 47D08; Secondary 35A08, 35B25}
\author{Krzysztof Bogdan}
 \address{Wroc{\l}aw University of Science and Technology,
Wybrze{\.z}e Wyspia{\'n}skiego 27,
50-370 Wroc{\l}aw, Poland}
\email{bogdan@pwr.edu.pl}
\author{Jacek Dziuba{\'n}ski}
\address{University of Wroc{\l}aw,
Pl. Grunwaldzki 2/4,
50-384 Wroc{\l}aw,
Poland}
\email{Jacek.Dziubanski@math.uni.wroc.pl}
\author{Karol Szczypkowski}
\address{Universit{\"a}t Bielefeld, Postfach 10 01 31,
D-33501 Bielefeld, Germany  and
Wroc{\l}aw University of Science and Technology,
Wybrze{\.z}e Wyspia{\'n}skiego 27,
50-370 Wroc{\l}aw, Poland
}
\email{karol.szczypkowski@math.uni-bielefeld.de, karol.szczypkowski@pwr.edu.pl}
\keywords{Schr\"odinger perturbation, sharp Gaussian estimates}
\begin{document}

\begin{abstract}
We characterize 
functions $V\le 0$ for which the 
heat kernel of the Schr\"o\-dinger operator $\Delta+V$ is comparable with the  Gauss-Weierstrass kernel uniformly in space and time. 
In dimension $4$ and higher the condition 
turns out to be more restrictive
than the condition of the boundedness of the Newtonian potential of $V$. This resolves the question of V.~Liskevich and Y.~Semenov posed in 1998.
We also 
give specialized sufficient conditions for the comparability,
showing that 
local $L^p$ integrability of 
$V$ for $p>1$ is not necessary for the comparability.

\end{abstract}

\maketitle

\section{Introduction and main results}\label{sec:prel}

Let $d=1,2,\ldots$.
We consider the Gauss-Weierstrass kernel,
\[g(t,x,y)=
(4\pi t)^{-d/2} e^{-|y-x|^2/(4t)}, \qquad t>0,\ x,y\in\Rd.\]
It is well known that $g$ is the fundamental solution of the equation $\partial_t u=\Delta u$, and time-homogeneous probability transition density--the heat kernel of $\Delta$.
For
Borel measurable function $V: \Rd\to \overline\RR
$
we call 
$G:(0,\infty)\times \Rd\times \Rd\to [0,\infty]$
the heat kernel of $\Delta+V$
or the Schr\"odinger perturbation of $g$ by $V$, if the following
Duhamel or perturbation formula holds for $t>0$, $x,y\in \Rd$,
\[
G(t,x,y)=g(t,x,y)+\int_0^t \int_\Rd G(s,x,z)V(z)g(t-s,z,y)dzds.
\]
Under appropriate assumptions on $V$,
explicit definition of $G$ 
may be given by means of the Feynmann-Kac formula \cite[Section~6]{MR2457489}, the Trotter formula \cite[p.~467]{MR1978999}, the perturbation series \cite{MR2457489}, or by means of quadratic forms on $L^2$ spaces \cite[Section~4]{MR591851}. 
In particular the assumption $V\in L^p(\Rd)$ with $p>d/2$ was
used
by Aronson
\cite{MR0435594}, Zhang \cite[Remark~1.1(b)]{MR1978999} and by Dziuba{\'n}ski and Zienkiewicz \cite{MR2164260}.
Aizenman and Simon \cite{MR644024,MR670130} proposed
functions $V(z)$ from the Kato class, which
contains
$L^{p}(\Rd)$
for every $p>d/2$
\cite[Chapter~4]{MR644024}, see also Chung and Zhao \cite[Chapter 3, Example 2]{MR1329992}.
An enlarged Kato class was used by Voigt \cite{MR845197} in the study of Schr{\"o}dinger semigroups on  $L^1$ \cite[Proposition~5.1]{MR845197}.
For perturbations by time-dependent functions $V(u,z)$, Zhang \cite{MR1360232,MR1488344} introduced the so-called parabolic Kato condition.
The 
condition was then
generalized and employed by Schnaubelt and Voigt \cite{MR1687500}, Liskevich and Semenov
\cite{MR1783642}, 
Milman and Semenov \cite{MR1994762},
Liskevich, Vogt and Voigt
\cite{MR2253015}, and Gulisashvili and van Casteren \cite{MR2253111}.

Given 
the function $V: \Rd\to \RR$ we ask if there are positive numbers, i.e., {\it constants} $0<c_1\le c_2<\infty$ such that the following two-sided bound holds,
\begin{align}\label{est:sharp_uni}
c_1  \leq \frac{G(t,x,y)}{g(t,x,y)}\leq c_2,\qquad t>0,\ x,y\in \Rd.
\end{align}
One can also ponder a weaker property--if for a given $T\in (0,\infty)$,
\begin{align}\label{est:sharp_time}
c_1  \leq \frac{G(t,x,y)}{g(t,x,y)}\leq c_2 \,,\qquad 0<t\le T,\ x,y\in \Rd.
\end{align}
We call \eqref{est:sharp_uni} and \eqref{est:sharp_time} {\it sharp Gaussian estimates} or bounds, respectively {\it global} (or uniform) and {\it local} in time.
We observe that the inequalities in \eqref{est:sharp_uni} and \eqref{est:sharp_time} are stronger than
the plain
{\it 
Gaussian estimates:}
\begin{align*}
c_1\, (4\pi t)^{-d/2} e^{-\frac{|y-x|^2}{4t\varepsilon_1}} \leq G(t,x,y)\leq c_2\, (4\pi t)^{-d/2} e^{-\frac{|y-x|^2}{4t\varepsilon_2}},\quad x\in \Rd,
\end{align*}
where
$0<\varepsilon_1,c_1\le 1\le \varepsilon_2,c_2<\infty$, which
can also be global or local in time.

Berenstein proved
the plain Gaussian estimates for  $V\in L^p$ with $p>d/2$ (see \cite{MR1642818}).
Simon \cite[Theorem B.7.1]{MR670130} resolved
them for $V$ in the Kato class, Zhang \cite{MR1488344}  and Milman and Semenov \cite{MR1994762} applied
the parabolic Kato class for this purpose.
For further discussion we refer the reader to
\cite{MR1783642}, \cite{MR2253015}, \cite{MR1994762}, \cite{MR1978999} and \cite[Lemma~4]{MR3000465}. We also refer to  Bogdan and Szczypkowski \cite[Section~1, 4]{MR3200161} for a survey of the plain Gaussian bounds for Schr\"odinger heat kernels along with a streamlined approach, new results and explicit constants based on the so-called 4G inequality.

The plain Gaussian estimates are ubiquitous in analysis but \eqref{est:sharp_uni} and  \eqref{est:sharp_time} provide precious qualitative information, if they hold for $V$.
It is intrinsically difficult to characterize \eqref{est:sharp_uni} and  \eqref{est:sharp_time}
for those $V$ that take on positive values, while the case of $V\le0$ is more manageable.
Arsen'ev proved \eqref{est:sharp_time} for $V\in L^p+L^{\infty}$
with $p>d/2$, $d\geq 3$. Van Casteren \cite{MR1009389} proved
\eqref{est:sharp_time} for $V$ in the intersection of the Kato class and
$L^{d/2}+L^{\infty}$ for $d\geq 3$ (see \cite{MR1994762}). Arsen'ev also obtained \eqref{est:sharp_uni} for $V\in L^p$ with $p > d/2$ under additional smoothness assumptions (see \cite{MR1642818}). Liskevich and Semenov  
stated sufficient conditions for \eqref{est:sharp_uni} and \eqref{est:sharp_time} in \cite[Theorem 1, Corollary 1, Theorem 2]{MR1642818}.
Zhang \cite[Theorem~1.1]{MR1978999} and Milman and Semenov \cite[Theorem~1C, Remark (2)]{MR1994762}
gave  sufficient 
integro-supremal conditions for \eqref{est:sharp_uni}  and  \eqref{est:sharp_time}
for general $V$  and characterized 
\eqref{est:sharp_uni} and \eqref{est:sharp_time} 
for $V\le 0$. It will be convenient to state the conditions by means of
\begin{align*}
S(V,t,x,y)=\int_0^t \int_{\Rd} \frac{g(s,x,z)g(t-s,z,y)}{g(t,x,y)}|V(z)|\,dzds, \quad t>0,\ x,y\in \Rd.
\end{align*}
The motivation for using this quantity comes from
Zhang \cite[Lemma~3.1 and Lemma~3.2]{MR1978999} and from Bogdan, Jakubowski and Hansen  \cite[(1)]{MR2457489}.
We often write $S(V)$ if we do not need to specify $t,x,y$.  As explained in Section~\ref{sec:gen_app}, $S(V)$ is the potential of $|V|$ for 
the so-called Gaussian bridges.
We also note that \cite[Section~6]{MR2457489} uses $S(V)$ for general transition densities. 
The next two results indicate why $S(V)$ is important. Their proofs are given in Section~\ref{sprel}.
\begin{lem}\label{cor:gen_neg}
Let $V\leq 0$. Then \eqref{est:sharp_uni} is equivalent to
\begin{equation}\label{e:sSbi}
\sup_{t>0,\,x,y\in\Rd} S(V,t,x,y)<\infty.
\end{equation}
Also, for each $T\in (0,\infty)$,
\eqref{est:sharp_time} is equivalent to
\begin{equation}\label{e:sSbt}
\sup_{0<t\leq T,\,x,y\in\Rd} S(V,t,x,y)<\infty.
\end{equation}
\end{lem}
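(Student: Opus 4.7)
\emph{Plan.} I would read $S(V,t,x,y)$ probabilistically: the ratio $g(s,x,z)g(t-s,z,y)/g(t,x,y)$ is the density at time $s$ of the Brownian bridge from $x$ to $y$ over $[0,t]$, so $S(V,t,x,y)$ is the bridge-expectation of $\int_0^t|V(B_s)|\,ds$. Coupled with the Feynman-Kac representation
\[
\frac{G(t,x,y)}{g(t,x,y)} = E^{x,y}_t \exp\!\Bigl(\int_0^t V(B_s)\,ds\Bigr),
\]
where $E^{x,y}_t$ is expectation under that bridge, the lemma reduces to one convexity step and a short manipulation of the Duhamel formula.

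\emph{Sufficiency.} Assume \eqref{e:sSbi} and set $M=\sup_{t,x,y} S(V,t,x,y)$. Because $V\le 0$, the Duhamel identity immediately yields $G\le g$, so the upper bound in \eqref{est:sharp_uni} holds with $c_2=1$ (no hypothesis on $S(V)$ is used here). For the matching lower bound, Jensen's inequality for the convex function $\exp$ gives
\[
\frac{G(t,x,y)}{g(t,x,y)} \ge \exp\!\Bigl(-E^{x,y}_t\!\int_0^t|V(B_s)|\,ds\Bigr)=e^{-S(V,t,x,y)}\ge e^{-M},
\]
so the choice $c_1=e^{-M}>0$ works.

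\emph{Necessity.} Assume \eqref{est:sharp_uni}. Since $V\le 0$, the Duhamel identity rearranges to
\[
g(t,x,y)-G(t,x,y)=\int_0^t\!\!\int_\Rd G(s,x,z)\,|V(z)|\,g(t-s,z,y)\,dz\,ds.
\]
Inserting the lower bound $G(s,x,z)\ge c_1 g(s,x,z)$ on the right turns the right-hand side into at least $c_1\,g(t,x,y)\,S(V,t,x,y)$; combining with the trivial $g(t,x,y)-G(t,x,y)\le g(t,x,y)$ (which uses only $G\ge 0$) yields $S(V,t,x,y)\le 1/c_1$ uniformly in $t,x,y$, i.e.\ \eqref{e:sSbi}.

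\emph{Local version and the main obstacle.} Restricting every supremum to $t\in(0,T]$ leaves both arguments intact, so \eqref{est:sharp_time}$\Leftrightarrow$\eqref{e:sSbt} with the same constants. The one non-routine point is justifying the Feynman-Kac identity for the very general Borel $V\le 0$ permitted in the paper; this is the place where I expect the technical work to sit. If the probabilistic representation is inconvenient, I would substitute the analytic perturbation series $G=g+\sum_{n\ge 1}(-1)^n g_n$ together with the Khasminskii-type bound $g_n\le (S(V)^n/n!)\,g$ controlled by $M$, which recovers $G/g\ge e^{-S(V)}$ after resummation, and leaves the rest of the argument untouched.
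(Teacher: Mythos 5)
Your main route is essentially the paper's own argument: the necessity direction is exactly the paper's one-line proof spelled out (rearrange the Duhamel formula for $V\le 0$, insert $G\ge c_1 g$ on the right and $g-G\le g$ from $G\ge 0$, giving $S\le 1/c_1$), and the sufficiency direction via $G\le g$ together with the Jensen lower bound $G/g\ge e^{-S(V,t,x,y)}$ over the Gaussian bridge is precisely how the paper gets the lower bound in \eqref{gen_est} (its Lemma~\ref{lem:gen_neg} cites Zhang and \cite[(41)]{MR2457489}, the latter being exactly Jensen's inequality for the bridge measure). You correctly identify the remaining technical point — constructing $G$ and validating the Duhamel/Feynman--Kac representation for general Borel $V\le 0$ — which the paper settles by citing Zhang's construction (valid once $S(V^-)$ is finite on bounded sets, which your hypotheses supply via subadditivity).

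One caution: your proposed fallback through the perturbation series is not viable as stated. Writing $g_n(t,x,y)/g(t,x,y)=\int_{0<s_1<\cdots<s_n<t}E^{x,y}_t\bigl[\prod_i|V(B_{s_i})|\bigr]\,ds$ and symmetrizing gives $g_n/g=\tfrac{1}{n!}E^{x,y}_t\bigl[(\int_0^t|V(B_s)|ds)^n\bigr]\ge S(V,t,x,y)^n/n!$ by Jensen, so the claimed bound $g_n\le (S^n/n!)\,g$ goes the wrong way; and even granting termwise bounds, upper bounds on the absolute values of the terms of an alternating series do not produce a lower bound for its sum, so ``resummation'' cannot recover $G/g\ge e^{-S}$ along that route. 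The correct series-free substitute is exactly the Jensen step you use in your primary argument, so nothing is lost — just drop the Khasminskii fallback and justify the representation (or the Duhamel formula plus $0\le G\le g$) by the construction the paper cites.
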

We say that $V$ satisfying \eqref{e:sSbi} or \eqref{e:sSbt} has bounded potential for bridges (is bridge-potential bounded)
globally or locally in time, respectively.

\begin{lem}\label{lem:gen_neg}
If for some
$h>0$ and $0\le \eta<1$ we have
$$\sup_{0<t\leq h,\,x,y\in\Rd} S(V^+,t,x,y)\le \eta,$$
and if $S(V^-)$ is bounded on bounded subsets of
$(0,\infty)\times\Rd\times\Rd$,
then
\begin{align}\label{gen_est}
e^{-S(V^-,t,x,y)} \leq \frac{G(t,x,y)}{g(t,x,y)}\leq \left(\frac{1}{1-\eta}\right)^{1+t/h}, \qquad t>0, \ x,y\in \Rd \,.
\end{align}
\end{lem}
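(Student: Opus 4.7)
My plan is to sandwich $G$ between the Schr\"odinger perturbations $G^{(+)}$ of $g$ by $V^+\ge 0$ and $G^{(-)}$ of $g$ by $-V^-\le 0$. Monotonicity of the Schr\"odinger perturbation in its potential, visible either from the perturbation series or from the Feynman--Kac representation, yields $G^{(-)}\le G\le G^{(+)}$; the local boundedness hypothesis on $S(V^-)$ guarantees that all three kernels are finite, so the problem decouples into two separate one-signed estimates.

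For the upper bound I apply a Khasminskii-type iteration to the perturbation series $G^{(+)}=\sum_{n\ge 0}T_n$, with $T_0=g$ and
\[
T_{n+1}(t,x,y)=\int_0^t\!\int_\Rd T_n(s,x,z)\,V^+(z)\,g(t-s,z,y)\,dz\,ds.
\]
Using the ``chain-bridge'' factorization of $g(s_1,x,z_1)g(s_2-s_1,z_1,z_2)\cdots g(t-s_n,z_n,y)/g(t,x,y)$ and nesting integrations, the hypothesis $S(V^+,t,x,y)\le\eta$ on $(0,h]$ absorbs one factor of $\eta$ per layer, giving $T_n\le \eta^n g$ on $(0,h]\times\Rd\times\Rd$; summation yields $G^{(+)}\le g/(1-\eta)$ there. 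To pass from $t\le h$ to arbitrary $t>0$, write $t=t_1+\cdots+t_n$ with $t_i\le h$ and $n\le 1+t/h$, and apply the Chapman--Kolmogorov equation to $G^{(+)}$ together with the convolution identity for $g$:
\[
G^{(+)}(t,x,y)=\int_{\Rd^{n-1}}\!\prod_{i=1}^n G^{(+)}(t_i,z_{i-1},z_i)\,dz_1\cdots dz_{n-1}\le (1-\eta)^{-n}\,g(t,x,y),
\]
with $z_0=x$ and $z_n=y$, which gives the stated upper bound.

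For the lower bound, I use the interpretation recalled in Section~\ref{sec:gen_app}: $S(W,t,x,y)=\mathbb{E}_{x,y}^{t}\!\left[\int_0^t W(X_s)\,ds\right]$, where $(X_s)_{0\le s\le t}$ is the Gaussian bridge from $x$ to $y$ of length $t$. Feynman--Kac gives
\[
G^{(-)}(t,x,y)=g(t,x,y)\,\mathbb{E}_{x,y}^{t}\!\left[\exp\!\left(-\!\int_0^t V^-(X_s)\,ds\right)\right],
\]
and Jensen's inequality applied to the convex function $u\mapsto e^{-u}$ produces $G^{(-)}\ge g\,e^{-S(V^-)}$, which together with $G\ge G^{(-)}$ closes the argument. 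The main obstacle is the Khasminskii bound $T_n\le\eta^n g$: it requires an inductive reduction of the $n$-fold nested integral via the chain-bridge factorization, each layer absorbing a factor of $\eta$ from the hypothesis on $S(V^+)$. Once this is in hand, the Chapman--Kolmogorov extension and the Jensen-based lower bound are both routine.
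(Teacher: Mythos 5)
Your proof is correct, and your lower bound is the same as the paper's: there too, $G\ge G^{(-)}\ge g\,e^{-S(V^-)}$ comes from the bridge (Feynman--Kac) representation together with Jensen's inequality, the inequality $G\ge G^{(-)}$ being built into the construction of $G$ as the perturbation of $G^{(-)}$ by $V^+$. Where you genuinely diverge is the upper bound. The paper never iterates the series on $(0,h]$: it bounds only the \emph{first} perturbation term, $p_1(s,x,t,y)\le[\eta+Q(s,t)]\,p_0(s,x,t,y)$ with the super-additive $Q(s,t)=(t-s)\eta/h$ (this is where the subadditivity \eqref{ineq:S_chapm} and Lemma~\ref{lemaF(t)}/Corollary~\ref{cor:ineq_most} enter), and then invokes the combinatorial estimate for perturbation series of \cite{MR2507445} (see also \cite{MR3000465}) to control the whole sum for all $t$ at once. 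You instead prove the Khasminskii bound $T_n\le\eta^n g$ on $(0,h]$ by a one-line induction on the recursion for $T_{n+1}$ (the ``chain-bridge'' factorization is an unnecessarily heavy phrasing of that induction), sum to get $G^{(+)}\le g/(1-\eta)$ for $t\le h$, and propagate to all $t$ via Chapman--Kolmogorov with $\lceil t/h\rceil\le 1+t/h$ factors, recovering the stated exponent. Your route is more elementary and avoids the external theorem of \cite{MR2507445}, but it uses two facts worth making explicit: the semigroup property of $G^{(+)}$ (for $V^+\ge0$ it follows by regrouping the nonnegative series via Tonelli, or from Feynman--Kac), and the domination $G\le G^{(+)}$, which is cleanest not from ``monotonicity of the signed perturbation series'' (whose terms change sign) but from the two-step construction: $G$ is the series perturbation of $G^{(-)}\le g$ by $V^+$, so term-by-term comparison with the series started from $g$ gives $G\le G^{(+)}$ --- exactly the paper's ``without loss of generality $V\ge0$'' reduction. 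What the paper's route buys in exchange is that its subadditivity machinery is reused elsewhere, e.g.\ in Corollary~\ref{prop:lower_exp}.
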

We also notice the following consequence of 
 the Duhamel formula.
\begin{rem}\label{rem:Vp}
If $V \geq 0$, then \eqref{est:sharp_uni} implies 
\eqref{e:sSbi} and \eqref{est:sharp_time} implies
\eqref{e:sSbt}.  
\end{rem}
For clarity we note
that $S(V)$ is unbounded 
for all nontrivial $V$ in dimensions
$d = 1$ and $2$, hence \eqref{est:sharp_uni} is impossible for nontrivial $V\ge 0$ and nontrivial $V\le 0$ in these dimensions. 
This is explained at the end of Section~\ref{sprel} below.

\subsection{Characterization of sharp Gaussian estimates}

The 
conditions in Lemma~\ref{cor:gen_neg} and 
\ref{lem:gen_neg} may be cumbersome to verify.
For this reason we propose a simpler integro-supremal test
for \eqref{est:sharp_uni}.
For $d\geq 3$ and $x,y\in \Rd$ we define
\begin{eqnarray*}
\Kz  (V,x,y)&=&\int_{\Rd} |V(z)|\Kz (z-x,y)\,dz\,,
\end{eqnarray*}
where
\begin{align}\label{Kernel}
\Kz (x,y)= \frac{e^{-
\left(|x||y|-x\cdot y \right)/2}}{|x|^{d-2}} \left(1+|x||y| \right)^{d/2-3/2}\,,
\end{align}
and $x\cdot y$ is the usual scalar product. We denote, as usual,
\begin{align*}\label{def:sSbi}
\|S(V) \|_{\infty}&=\sup\limits_{t>0,\,x,y\in\Rd} S(V,t,x,y),\\
\|\Kz(V) \|_{\infty}&= \sup\limits _{x,y\in\Rd}
\Kz (V,x,y)\,.
\end{align*}
These two integro-supremal quantities
turn out to be comparable, as follows.
\begin{thm}\label{thm:J_0}
There are constants 
$M_1$, $M_2$  depending only on $d$, such that
\begin{equation}\label{eq:cSK0}
M_1 \|K(V)\|_{\infty} \leq 
\|S(V)\|_{\infty}
\leq M_2 \|K(V)\|_{\infty}\,.
\end{equation}
\end{thm}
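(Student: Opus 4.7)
The quantity $S(V,t,x,y)$ is an integral of $|V(z)|$ against the \emph{bridge occupation kernel}
\[
I(t,x,y,z) := \int_0^t \frac{g(s,x,z)g(t-s,z,y)}{g(t,x,y)}\,ds.
\]
My goal is to establish the pointwise two-sided comparison
\[
\sup_{t>0} I(t,x,y,z) \;\asymp\; K(z-x,\,y-x) + K(z-y,\,x-y),
\]
with constants depending only on $d$, and then pass to the integral form in Theorem~\ref{thm:J_0}. The two summands on the right capture the singularities of $I$ at $z=x$ and $z=y$; the factor $(1+|\cdot||\cdot|)^{(d-3)/2}$ inside each $K$ accounts for the bulk of the bridge's occupation near the segment from $x$ to $y$, and the exponential $e^{-(|\cdot||\cdot|-\cdot\cdot)/2}$ for the transverse suppression.

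\textbf{Upper bound.} By translation invariance take $x=0$ and set $a=z$, $b=y$. The completion-of-squares identity $|a|^2/s + |a-b|^2/(t-s) - |b|^2/t = |a-(s/t)b|^2/(s(t-s)/t)$ (with $u=s/t$) recasts the integrand as a Gaussian density centered at the bridge mean $(s/t)b$. Split $[0,t]$ at $t/2$; the substitution $s \mapsto t-s$, together with $a \leftrightarrow a-b$, interchanges the halves, so it suffices to estimate the $s \le t/2$ portion. Inside that portion, split further into a \emph{boundary regime} $s \ll t|a|^2/|b|^2$ which produces the singular factor $|a|^{2-d}$ via the change of variable $v = |a|^2/(4s)$, and a \emph{bulk regime} controlled by a Laplace/saddle-point estimate about $r = s/t \approx |a|/(|a|+|a-b|)$, yielding the polynomial $(1+|a||b|)^{(d-3)/2}$ and the exponential $e^{-(|a||b|-a\cdot b)/2}$ after optimizing the resulting envelope in $t$. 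Assembling these gives $\sup_t I(t,x,y,z) \le C_d [K(a,b) + K(a-b,-b)]$; integrating against $|V|$ and using $\sup_{x,y} K(V,x,y) = \sup_{x,y} \int |V(z)| K(z-y,x-y)\,dz$ (by relabeling $x \leftrightarrow y$) yields the upper inequality $\|S(V)\|_\infty \le C'_d \|K(V)\|_\infty$.

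\textbf{Lower bound and main obstacle.} Here one cannot freely swap $\sup_t$ with the integral over $z$. The plan is to construct, for each fixed $(x,y)$, finitely many times $t_1(x,y), \ldots, t_N(x,y)$ and a partition $\Rd = \bigsqcup_{j=1}^N A_j(x,y)$ with $N = N(d)$, such that $I(t_j,x,y,z) \ge c_d K(z-x,y-x)$ for every $z \in A_j$. A pigeonhole argument on $\int_{A_j} |V(z)| K(z-x,y-x)\,dz$ then produces an index $j^\ast$ with $S(V, t_{j^\ast}, x, y) \ge (c_d/N_d) K(V,x,y)$, and hence $\|S(V)\|_\infty \ge c''_d \|K(V)\|_\infty$. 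The hardest step is this construction: one must invert the saddle-point analysis of the upper bound to locate, for each $z$, the $t$-scale at which $I(\cdot,x,y,z)$ attains a constant fraction of $\sup_t I(t,x,y,z)$, and show that only $O_d(1)$ dyadic $t$-scales suffice to cover $\Rd$ as $z$ varies over $(x,y)$ fixed. This finite-dimensionality of the ``optimal-$t$'' stratification --- reducing an ostensibly $z$-dependent optimization to a uniformly controlled finite list of times --- is the principal technical obstacle.
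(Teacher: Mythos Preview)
Your plan hinges on the pointwise comparison
\[
\sup_{t>0} I(t,x,y,z)\;\asymp\;K(z-x,y-x)+K(z-y,x-y),
\]
but this is \emph{false} in both directions for $d\ge 4$. Take $x=0$, $y=e_1$. If $z=Re_1$ with $R$ large, one checks (e.g.\ by scaling $t=\alpha R^2$) that $\sup_t I(t,0,e_1,Re_1)\asymp R^{2-d}$, while $K(Re_1,e_1)=R^{2-d}(1+R)^{(d-3)/2}\asymp R^{(1-d)/2}\gg R^{2-d}$; this kills the lower half of your comparison and hence your partition scheme, since even the unrestricted supremum $\sup_t I(t,x,y,z)$ falls short of $c_dK(z-x,y-x)$. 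Conversely, if $z=Re_2\perp e_1$, then letting $t\to\infty$ gives $\sup_t I(t,0,e_1,Re_2)\ge c\,R^{2-d}$, whereas $K(Re_2,e_1)+K(Re_2-e_1,-e_1)$ carries the factor $e^{-R/2}$ and is exponentially small; this kills the upper half. So neither the upper bound ``assembling these gives $\sup_t I\le C_d[K+K]$'' nor the lower-bound partition step can go through as written.

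The underlying conceptual error is the identification of the second argument of $K$ with the bridge displacement $y-x$. In the paper's setup $K(V,x,w)=\int_{\Rd}|V(z)|K(z-x,w)\,dz$ has $w$ as a \emph{free} drift parameter ranging over all of $\Rd$, unrelated to the bridge endpoints. The correct correspondence emerges from the change of variables $w=(y-x)/t$: as $(t,x,y)$ range over $(0,\infty)\times\Rd\times\Rd$, the pair $(y,w)$ ranges over $\Rd\times\Rd$, and the remaining freedom (sending $t\to\infty$ with $w$ fixed, hence $|y-x|\to\infty$) is exactly what turns the truncated $\tau$-integral into the full kernel $J(z-y,w)=\int_0^\infty \tau^{-d/2}e^{-|z-y+\tau w|^2/(4\tau)}d\tau$. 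Your approach, by fixing $(x,y)$ and varying only $t$, pins $w$ to the ray $\{\lambda(y-x):\lambda>0\}$ and loses a degree of freedom; the ``finite partition in $t$'' cannot recover it.

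The paper avoids all of this by working at the level of the integrated quantities throughout. It first invokes Zhang's pointwise sandwich $m_1 N(V,t/2,x,y)\le S(V,t,x,y)\le m_2 N(V,t,x,y)$ (the inequalities \eqref{L}, \eqref{U}), then uses the substitution $w=(y-x)/t$ together with a symmetry observation on the two halves of $N$ to obtain $\|S(V)\|_\infty\asymp \sup_{y,w}\int_{\Rd}J(z-y,w)|V(z)|\,dz=(4\pi)^{d/2}e_*(V,0)$. Finally, a short closed-form computation via modified Bessel functions gives $J(x,w)=2e^{x\cdot w/2}(|x|/|w|)^{1-d/2}K_{d/2-1}(|x||w|/2)\asymp K(x,w)$, which finishes the proof. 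No pointwise-in-$z$ comparison of $\sup_t I$ with $K$ is needed, and the supremum stays outside the $z$-integral at every step.
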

The proof of Theorem~\ref{thm:J_0} is given in Section~\ref{sec:proofs}.
By \eqref{eq:cSK0} and Lemma~
\ref{cor:gen_neg} we get the following characterization of the sharp global Gaussian estimates.

\begin{cor}\label{cor:sGbbyK}
If $V\leq 0$, then \eqref{est:sharp_uni} holds if and only if $\Kz (V)$ is bounded.
\end{cor}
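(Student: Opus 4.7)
The plan is to obtain Corollary~\ref{cor:sGbbyK} as an immediate consequence of the two results cited right above it: Lemma~\ref{cor:gen_neg} and Theorem~\ref{thm:J_0}. Since $V\le 0$, we have $|V|=-V$ and $S(V)=S(|V|)$, $K(V)=K(|V|)$, so the sign of $V$ enters only through the applicability of the ``if and only if'' clause in Lemma~\ref{cor:gen_neg}.

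First I would invoke Lemma~\ref{cor:gen_neg} to replace the sharp global bound \eqref{est:sharp_uni} by the equivalent condition $\|S(V)\|_\infty<\infty$. Then I would apply Theorem~\ref{thm:J_0}, whose content is exactly the two-sided comparison
\[
M_1\,\|K(V)\|_\infty \le \|S(V)\|_\infty \le M_2\,\|K(V)\|_\infty,
\]
with $M_1,M_2>0$ depending only on $d$. This comparison gives at once $\|S(V)\|_\infty<\infty$ iff $\|K(V)\|_\infty<\infty$, which chained with the previous equivalence yields the desired characterization.

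Because this is a direct combination of previously stated results, there is no genuine obstacle in the deduction itself; all the work has been deferred to the proof of Theorem~\ref{thm:J_0} in Section~\ref{sec:proofs}. The only small point worth flagging is that Corollary~\ref{cor:sGbbyK} implicitly requires $d\ge 3$, since the kernel $K(x,y)$ in \eqref{Kernel} and hence the quantity $K(V,x,y)$ are defined only in that range; in dimensions $1$ and $2$ the final remark of the introduction already rules out \eqref{est:sharp_uni} for nontrivial $V\le 0$, so the statement is vacuous there and no separate argument is needed.
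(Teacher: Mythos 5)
Your proposal is correct and follows exactly the paper's route: the corollary is obtained by combining the equivalence of \eqref{est:sharp_uni} with $\|S(V)\|_\infty<\infty$ from Lemma~\ref{cor:gen_neg} (for $V\le 0$) with the two-sided comparison \eqref{eq:cSK0} of Theorem~\ref{thm:J_0}. Your remark that $d\ge 3$ is implicit (since $K$ is defined only there, and \eqref{est:sharp_uni} fails for nontrivial $V\le 0$ when $d=1,2$) is a sensible clarification consistent with the paper's discussion.
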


Similarly,  for general (signed) $V$ we get \eqref{est:sharp_uni} 
provided $\|\Kz (V^-)\|_\infty<\infty$ and $\|\Kz (V^+)\|_\infty<1/M_2$. This follows from  Lemma~\ref{lem:gen_neg} and Theorem~\ref{thm:J_0}.

We next elaborate on more specific applications of $K(V)$ to sharp global Gaussian estimates. In particular we resolve a 
long-standing open problem posed by Liskevich and Semenov.
For $d\geq 3$ we let $C_d=\Gamma(d/2-1)/(4\pi^{d/2})$. The
Newtonian kernel is $\int_0^{\infty}g(s,x,z)ds=C_d|z-x|^{2-d}$, $x,z\in \Rd$, and the Newtonian 
potential of (a nonnegative) function $f$ 
at $x\in \Rd$ is denoted
\begin{align*}
-\Delta^{-1} f(x)
:=\int_0^{\infty}\int_{\Rd}g(s,x,z) f(z)\,dzds
=\int_{\Rd} \frac{C_d}{|z-x|^{d-2}}f(z)\,dz\,.
\end{align*}
For $d=3$ the formula \eqref{Kernel}
considerably simplifies and we easily obtain
\begin{align}\label{eq:d_3}
\|\Kz (V)\|_\infty= C_{d}^{-1}\, \|\Delta^{-1} |V| \|_{\infty} \quad \mbox{ for } d=3\,.
\end{align}
Thus if $d=3$ and $V\leq 0$, then by Corollary~\ref{cor:sGbbyK}
the sharp global Gaussian bounds \eqref{est:sharp_uni} are equivalent to the condition of potential-boundedness, namely
$\|\Delta^{-1} V \|_{\infty}<\infty$. This  is classical
\cite[Remark~(3) on p. 4]{MR1994762} but remarkable, 
because the Newtonian kernel is isotropic, that is rotation-invariant, while $K(V)$ and $S(V)$ have a certain anisotropy-sensitivity.

Putting aside the exceptional, the main focus of the present paper is on 
$d\ge 4$. As usual, we let 
$$\|V \|_{d/2}=\left(\int_\Rd |V(z)|^{d/2}dz\right)^{2/d}.$$
By \cite[Theorem~2]{MR1642818} and \cite[Remark~(1) and (4) on p. 4]{MR1994762} we have \eqref{est:sharp_uni} for $d\geq 4$ if
$\|\Delta^{-1} V^- \|_{\infty}+\|V^-\|_{d/2}<\infty$, $\|\Delta^{-1} V^+ \|_{\infty}<1$ and
$\|V^+\|_{d/2}$  is
small enough.
A long-standing open problem for \eqref{est:sharp_uni} with $V\le 0$ posed in 1998 by Liskevich and Semenov \cite[p. 602]{MR1642818} reads as follows: ``The validity of the two-sided estimates for the case $d>3$ without the additional assumption $V\in L^{d/2}$ is an open question.'' 
In view of Theorem~\ref{thm:J_0} and Lemma~\ref{cor:gen_neg} the question is whether  for $d\ge 4$ the finiteness of $\|\Delta^{-1}V\|_{\infty}$ implies the finiteness of  
$\|K(V)\|_{\infty}$. Our next estimate
is a variant of \cite[Corollary~1]{MR1642818} and closely relates  $\|K(V)\|_{\infty}$ to $\|\Delta^{-1} V \|_{\infty}$ and $\|V\|_{d/2}$ for $d\geq 4$:
\begin{equation}\label{eq:KV}
C_d^{-1}\|\Delta^{-1}|V|\|_{\infty}\le
\|\Kz (V)\|_\infty \leq 2^{(d-3)/2}\Big( C_d^{-1}\|\Delta^{-1}|V|  \|_{\infty} + \kappa_d \|V \|_{d/2}\Big).
\end{equation}
In Section~\ref{sec:proofs} we prove \eqref{eq:KV}  
and the following result, which points out  a gap between $\|\Kz (V)\|_\infty$ and $\|\Delta^{-1}|V|\|_{\infty}$ in  \eqref{eq:KV}. 
\begin{prop}\label{thm:dist}
Let $d\geq 4$. 
For $z=(z_1, z_2,\ldots,z_d)\in\Rd$ we write $z=(z_1,\mathbf z_2)$, 
where $\mathbf z_2=(z_2,\ldots,z_d)\in \RR^{d-1}$. 
We define
\begin{eqnarray*}
A&=&\{(z_1,\mathbf z_2)\in \Rd : \ z_1>4, \ |\mathbf z_2|\leq \sqrt{z_1}\}, \quad \mbox{ and }\\
V(z_1,\mathbf z_2)&=&-\frac{1}{z_1} \ind_{A}(z_1,\mathbf z_2).
\end{eqnarray*} 
Then $\| \Delta^{-1} V \|_{\infty}<\infty$ but
$\|\Kz (V)\|_\infty=\infty$.
There is even a function $V \le  0$ with compact support such that $\|\Delta^{-1}V\|_{\infty}<\infty$ but $\|\Kz (V)\|_\infty=\infty$.
\end{prop}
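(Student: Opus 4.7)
The plan is to first verify the two properties for the explicit $V$ on $A$, and then derive the compact-support claim by a scaling-and-gluing construction. Write $x = (x_1, \mathbf x_2)$ throughout.

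For the boundedness of $\|\Delta^{-1} V\|_\infty$, note that $A$ is symmetric under $\mathbf z_2 \mapsto -\mathbf z_2$, so by Riesz rearrangement it suffices to treat $x = (x_1, 0)$. Expressing the Newton integral in spherical coordinates in $\mathbf z_2$,
\[\Delta^{-1}|V|(x_1, 0) = c_d \int_4^\infty \frac{1}{z_1} I(z_1, x_1)\,dz_1,\qquad I(z_1,x_1):=\int_0^{\sqrt{z_1}}\frac{r^{d-2}\,dr}{((z_1-x_1)^2+r^2)^{(d-2)/2}}.\]
The key estimates are $I \le c\sqrt{z_1}$ and $I \le c\,z_1^{(d-1)/2}/|z_1-x_1|^{d-2}$. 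Splitting the outer integration into $z_1<x_1/2$, $z_1 \in [x_1/2,2x_1]$, $z_1>2x_1$, and in the middle range further into $|z_1-x_1|<\sqrt{z_1}$ versus $|z_1-x_1|\ge\sqrt{z_1}$, each piece is $O(1)$ uniformly in $x_1$; the case $x_1\le 4$ is trivial. For the divergence $\|K(V)\|_\infty=\infty$, take $x=0$, $y=e_1$. The constraints $|\mathbf z_2|^2\le z_1$ and $z_1\ge 4$ force $|z|\asymp z_1$ and $|z|-z_1\le 1/2$, so the exponential in $K(z,e_1)$ is bounded below by $e^{-1/4}$, whence
\[K(V,0,e_1)\gtrsim\int_A\frac{z_1^{(d-3)/2}}{z_1\cdot z_1^{d-2}}\,dz = c\int_4^\infty z_1^{-(d+1)/2}\bigl|B^{d-1}(\sqrt{z_1})\bigr|\,dz_1 = c'\int_4^\infty \frac{dz_1}{z_1}=\infty.\]

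For the compact-support version, define the truncations $V_R := V\,\ind_{\{|z|\le R\}}$. Each $V_R$ has compact support, $\|\Delta^{-1}V_R\|_\infty\le\|\Delta^{-1}V\|_\infty =: C<\infty$, while the same lower bound restricted to $z_1\le R/2$ gives $\|K(V_R)\|_\infty\ge K(V_R,0,e_1)\gtrsim\log R$. Now both $\|\Delta^{-1}\cdot\|_\infty$ and $\|K(\cdot)\|_\infty$ scale by $\lambda^{-2}$ under $V\mapsto V(\lambda\,\cdot)$ and are translation invariant. Choose $R_n:=\exp(n^3)$, $\lambda_n := 2^n R_n$, $c_n := \lambda_n^2/n^2$, and translates $\tau_n$ so that the balls $B(\tau_n, 2^{-n})$ are pairwise disjoint and contained in a fixed compact set $\Omega\subset\Rd$. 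Set $V^{(n)}(z):= c_n V_{R_n}(\lambda_n(z-\tau_n))$ and $V^*:=\sum_n V^{(n)}$. Then $V^*\le 0$ has compact support inside $\Omega$, and since the scaling yields $\|\Delta^{-1}V^{(n)}\|_\infty = c_n\lambda_n^{-2}\|\Delta^{-1}V_{R_n}\|_\infty \le C/n^2$,
\[\|\Delta^{-1}V^*\|_\infty\le \sum_n \|\Delta^{-1}V^{(n)}\|_\infty\le C\sum_n n^{-2}<\infty.\]
On the other hand, because the supports of the $V^{(n)}$ are disjoint, $|V^*|\ge |V^{(n)}|$ pointwise, so
\[\|K(V^*)\|_\infty\ge\sup_n\|K(V^{(n)})\|_\infty = \sup_n c_n\lambda_n^{-2}\|K(V_{R_n})\|_\infty \ge \sup_n \frac{c\log R_n}{n^2}=\sup_n cn=\infty.\]

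The main obstacle is the uniform boundedness of $\Delta^{-1}V$: since the paraboloid cross-section has $(d-1)$-dimensional volume $\sim z_1^{(d-1)/2}$ against the Newton kernel $\sim z_1^{-(d-2)}$, the $1/z_1$ weight barely compensates for $d\ge 4$. In the transition regime $z_1\approx x_1$, the naive bound $I\le c\sqrt{z_1}$ would give $\int dz_1/\sqrt{z_1}\sim \sqrt{x_1}$, unbounded; only the sharper anisotropic estimate $I\lesssim z_1^{(d-1)/2}/|z_1-x_1|^{d-2}$ for $|z_1-x_1|\gtrsim\sqrt{z_1}$ recovers boundedness. Once this is in place, the lower bound for $K$ is immediate from $|z|\asymp z_1$ on $A$, and the compact-support reduction becomes a routine scaling-and-gluing argument.
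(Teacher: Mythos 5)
Your proposal is correct and follows essentially the same route as the paper: the same test point $x=0$, $y=e_1$ with $|z|-z_1\le 1$ on $A$ for the divergence of $\|K(V)\|_\infty$, the same two kernel bounds (trivial bound near $z_1\approx x_1$, the anisotropic bound $z_1^{(d-1)/2}|z_1-x_1|^{-(d-2)}$ away from it) after the rearrangement reduction to $x=(x_1,\mathbf 0)$ for $\|\Delta^{-1}V\|_\infty<\infty$. Your compact-support step is a mild variant of the paper's: the paper uses the norm-preserving dilation $d_sf(x)=sf(\sqrt s\,x)$ and a weighted sum of rescaled truncations supported in $B(0,1)$, while you compensate the scaling explicitly via the quantitative $\log R$ growth and glue disjoint translates — both arguments are sound.
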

From Lemma~\ref{cor:gen_neg} we conclude that
for $d\ge 4$ neither finiteness nor smallness of $\|\Delta^{-1} V \|_{\infty}$
are sufficient for \eqref{est:sharp_uni}. Therefore
the answer to the question of Liskevich and Semenov is negative.

Here are a few more comments that relate our result to existing literature and serve as preparation for the proofs.
Due to the work of Zhang \cite{MR1978999}, the following quantity
is a proxy
for $S(V)$,
\begin{align}
N(V,t,x,y)&:=\int_0^{t/2}\int_{\Rd}\frac{e^{-|z-y+(\tau/t)(y-x)|^2/(4\tau)}}{\tau^{d/2}}|V(z)|dzd\tau \nonumber\\
&+\int_{t/2}^t\int_{\Rd} \frac{e^{-|z-y+(\tau/t)(y-x)|^2/(4(t-\tau))}}{(t-\tau)^{d/2}} |V(z)|dzd\tau= N(V,t,y,x) \,,\label{def:N}
\end{align}
where $t>0$, $x,y\in \Rd$. Indeed,
by \cite[Lemma 3.1, Lemma 3.2, and line 11 on p. 469]{MR1978999}
there  are constants $m_1,m_2$ depending only on $d$ such that
\begin{align}
S(V,t,x,y)&\geq m_1\, N(V,t/2,x,y)\,,\qquad t>0, \ x,y\in \Rd \,,\tag{L} \label{L}\\
S(V,t,x,y)&\leq m_2\, N(V,t,x,y)\,,\qquad t>0, \ x,y\in \Rd \,.\tag{U} \label{U}
\end{align}
We also let $\|N(V) \|_{\infty}=\sup\limits_{t>0,\,x,y\in\Rd} N(V,t,x,y)$. By \eqref{L} and \eqref{U} we get
\begin{displaymath}
m_1\|N(V) \|_{\infty}\le
\|S(V) \|_{\infty}\le
m_2\|N(V) \|_{\infty}.
\end{displaymath}
In \cite[Theorem~1C]{{MR1994762}} and \cite[(8)]{MR1642818}
another quantity was used to study \eqref{est:sharp_uni} and \eqref{est:sharp_time},
\begin{align*}
e_*(V,\lambda)
&=\sup_{\alpha\in\Rd}\|V (\lambda-\Delta+2\alpha\cdot\nabla)^{-1}\|_{1\to 1}\\
&=\sup_{\alpha \in \Rd}\|(\lambda-\Delta+2\alpha\cdot\nabla )^{-1}|V|\|_{\infty}\,,
\end{align*}
where $\lambda\ge 0$. It may be given in terms of the Gauss-Weierstrass kernel, e.g.,
\begin{align*}
e_*(V,0)
&=(4\pi)^{-d/2}\sup_{x,y\in\Rd} \int_{\Rd} J(z-x,y) |V(z)|\,dz\,, \label{def:egJ}
\end{align*}
where
$$J(x,y)=\int_0^{\infty} \tau^{-d/2} e^{-\frac{|x-\tau y|^2}{4\tau}} d\tau\,, \qquad x,y\in\Rd.$$ 
There is a certain anisotropy-sensitivity of $e_*(V,0)$ due to $2\alpha\cdot\nabla$ above, which  is similar to 
that of $K(V)$. In fact,
in Lemma~\ref{lem:J_0} below we prove that
there are constants $c_1$, $c_2$ depending only on $d\ge 3$ such that
\begin{equation}\label{eq:ces}
c_1 \|\Kz (V)\|_\infty \leq e_*(V,0)\leq c_2 \|\Kz (V)\|_\infty\,.
\end{equation}
In view of Theorem~\ref{thm:J_0}, the quantities $\|S(V)\|_{\infty}$, $\|K(V)\|_{\infty}$, $\|N(V)\|_{\infty}$, $e_*(V,0)$ are all comparable, which makes them equivalent for studying \eqref{est:sharp_uni} with $V\leq 0$.

We 
add a few comments on the exceptional case $d=3$. By \cite[(3) in Remark on p. 4]{MR1994762}
and \eqref{eq:d_3} we  have 
$
e_*(V,0)=\|\Delta^{-1} |V| \|_{\infty}= C_{d}\|\Kz (V)\|_\infty$.
Also, for $d=3$ 
by  \cite[
Remark~(1) and (3) on p.~4]{MR1994762}, the condition
\begin{equation}\label{eq:mn1}
\|\Delta^{-1} V^- \|_{\infty}<\infty,\quad
\|\Delta^{-1} V^+ \|_{\infty}<1,
\end{equation}
suffices for \eqref{est:sharp_uni}.
Furthermore, if 
$V\le 0$, then 
the condition $
\|\Delta^{-1}V\|_{\infty}<\infty$ characterizes 
the plain global Gaussian bounds, see \cite{MR1456565} and \cite[p. 556 and Corollary~A]{MR1772429}.
Therefore by \eqref{eq:d_3}, for $d=3$ 
the plain global Gaussian bounds hold for $V\le 0$ if and only if the sharp global Gaussian bounds hold.
In contrast, for $d\geq 4$ by Proposition~\ref{thm:dist} the plain global Gaussian bounds may occur in the absence of the sharp global Gaussian bounds \eqref{est:sharp_uni}.

 We recall that $\sup_{0<t\leq T,\,x,y\in\Rd} S(V,t,x,y)$ with $T<\infty$ is useful for the local in time sharp Gaussian estimates \eqref{est:sharp_time}, see
Lemma~\ref{cor:gen_neg} and \ref{lem:gen_neg}.
In a similar fashion  $\sup_{0<t\leq T,\,x,y\in\Rd} N(V,t,x,y)$
is used in \cite[Theorem~1.1]{MR1978999},
while in \cite[Theorem~1C]{MR1994762} the authors
make use of $e_*(V,\lambda)$ for $\lambda>0$.
In this connection see also 
Corollary~\ref{prop:lower_exp} below.

\subsection{Sufficient conditions for sharp Gaussian estimates}

In this section we propose sufficient conditions for \eqref{est:sharp_uni} and \eqref{est:sharp_time} for functions $V$ which have a form of the tensor product.
Such conditions are the second main topic of the paper--they culminate in Theorem~\ref{prop:most_extended} below. We also 
show that
$L^p$ integrability for $p>1$ is not necessary for \eqref{est:sharp_uni} or \eqref{est:sharp_time}.
Let
$p, p_1, p_2\in [1,\infty]$.
\begin{defin}\label{d:tp}
We write
$f\in L^{p_1}(\RR^{d_1})\times L^{p_2}(\RR^{d_2})$ if there are $f_1\in L^{p_1}(\RR^{d_1})$ and $f_2\in L^{p_2}(\RR^{d_2})$, such that
$$
f(x_1,x_2)=f_1(x_1) f_2(x_2)\,,\qquad x_1\in\RR^{d_1}, \ x_2\in\RR^{d_2}.
$$
\end{defin}
\noindent
We note that $L^{p}(\RR^{d_1})\times L^{p}(\RR^{d_2}) \subset
L^{p}(\RR^{d_1+d_2})$, in fact
$\|f\|_p=\|f_1\|_p\|f_2\|_p$ if  $f$ is the tensor product $f (x_1,x_2)=f_1(x_1)f_2(x_2)$.
\vspace{5pt}
\begin{thm}\label{prop:most_extended}
Let $d_1, d_2\in \NN$, $d=d_1+d_2$, $V\colon \Rd\to \RR$,
$p_1, p_2\in [1,\infty]$
and
$$
\frac{d_1}{2p_1}+\frac{d_2}{2p_2}=1\,.
$$
\begin{enumerate}
\item[\rm (a)] If $r\in(p_1,\infty]$ and $V\in L^{r}(\RR^{d_1})\times L^{p_2}(\RR^{d_2})$, then
$$
\sup_{x,y\in\Rd} S(V,t,x,y)\leq c\, t^{1-d_1/(2r)-d_2/(2p_2)}\,,
$$
where $c=C(d_1,r)C(d_2,p_2) \frac{[\Gamma(1-d_1/(2r)-d_2/(2p_2))]^2}{\Gamma(2-d_1/r-d_2/p_2)} \|V_1 \|_{r}\|V_2 \|_{p_2}$.
\item[\rm(b)]
If $1\le q <p_1<r\le \infty$ and
$V\in \left[L^{q}(\RR^{d_1})\cap L^{r}(\RR^{d_1}) \right]\!\!\times L^{p_2}(\RR^{d_2})$, then
\eqref{e:sSbi} holds.
\end{enumerate}
\end{thm}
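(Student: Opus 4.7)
The plan is to exploit the tensor product form $V(z)=V_1(z_1)V_2(z_2)$, $z=(z_1,z_2)\in\RR^{d_1}\times\RR^{d_2}$, together with the dimension-by-dimension factorization of the Gauss--Weierstrass kernel, $g(t,x,z)=g_{d_1}(t,x_1,z_1)\,g_{d_2}(t,x_2,z_2)$. Consequently the bridge kernel $q(s,t,x,z,y):=g(s,x,z)g(t-s,z,y)/g(t,x,y)$ factorizes as $q_{d_1}\cdot q_{d_2}$, where each $q_{d_i}(s,t,x_i,\cdot,y_i)$ is a Gaussian density on $\RR^{d_i}$ with variance parameter $\sigma^2:=s(t-s)/t$ and some explicit mean. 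Hence
\[
S(V,t,x,y)=\int_0^t \Bigl(\int_{\RR^{d_1}} q_{d_1}\,|V_1|\,dz_1\Bigr)\Bigl(\int_{\RR^{d_2}} q_{d_2}\,|V_2|\,dz_2\Bigr)ds.
\]

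For part (a), I apply H\"older's inequality to each factor. A direct computation of the $L^{p'}$-norm of a Gaussian density yields the key estimate $\|q_{d_i}(s,t,x_i,\cdot,y_i)\|_{p'}=C(d_i,p)\,\sigma^{-d_i/p}$, independent of $x_i,y_i$ by translation invariance. Using this with $p=r$ on the first factor and $p=p_2$ on the second, the spatial variables disappear and one obtains
\[
S(V,t,x,y)\le C(d_1,r)C(d_2,p_2)\,\|V_1\|_r\|V_2\|_{p_2}\int_0^t \bigl(s(t-s)/t\bigr)^{-\alpha}ds,
\]
with $\alpha=d_1/(2r)+d_2/(2p_2)$. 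Since $r>p_1$ and $d_1/(2p_1)+d_2/(2p_2)=1$ force $\alpha<1$, the substitution $s=tu$ reduces the integral to $t^{1-\alpha}B(1-\alpha,1-\alpha)$, and the Beta--Gamma identity produces the constant claimed.

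For part (b), I use both $\|V_1\|_r$ and $\|V_1\|_q$ and interpolate. The H\"older estimate from (a) furnishes two pointwise bounds for the $z_1$-integral:
\[
\int q_{d_1}|V_1|\,dz_1\le \min\!\bigl(C(d_1,r)\,\sigma^{-d_1/r}\|V_1\|_r,\; C(d_1,q)\,\sigma^{-d_1/q}\|V_1\|_q\bigr).
\]
Set $\beta=d_1/(2r)$, $\gamma=d_1/(2q)$, $\delta=d_2/(2p_2)$; then $q<p_1<r$ combined with $d_1/(2p_1)+d_2/(2p_2)=1$ gives precisely $\beta+\delta<1<\gamma+\delta$. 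Using the elementary bound $\sigma^2\ge s/2$ on $s\in(0,t/2]$ (and the symmetric bound on $[t/2,t]$), the $s$-integrand is dominated by $\min\bigl(C_1 s^{-(\beta+\delta)},\,C_2 s^{-(\gamma+\delta)}\bigr)$ for constants $C_1,C_2$ independent of $t,x,y$. Splitting at the crossover point $s_\ast=(C_2/C_1)^{1/(\gamma-\beta)}$, which depends only on $\|V_1\|_q/\|V_1\|_r$ and the fixed exponents (not on $t$), I estimate
\[
\int_0^{s_\ast}\!C_1 s^{-(\beta+\delta)}ds+\int_{s_\ast}^\infty\!C_2 s^{-(\gamma+\delta)}ds<\infty,
\]
both half-integrals being finite precisely because $\beta+\delta<1$ and $\gamma+\delta>1$. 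This proves \eqref{e:sSbi}.

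The main delicate point is ensuring that the bound in (b) is uniform in $t$. Since $s_\ast$ is determined by the ratio $\|V_1\|_q/\|V_1\|_r$ and the scaling exponents alone, the split of the time integral at $s_\ast$ gives a $t$-free bound; the integrability at both endpoints $0$ and $\infty$ is exactly the statement that $q<p_1<r$ straddles the critical exponent $p_1$ provided by the scaling identity $d_1/(2p_1)+d_2/(2p_2)=1$.
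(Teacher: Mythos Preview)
Your proof is correct and follows essentially the same route as the paper: factorize the bridge kernel as a product of lower-dimensional Gaussian bridges (the paper's Remark~\ref{rem:obs} and Lemma~\ref{lem:Lp1_Lp2}), apply H\"older in each factor, and then integrate in $s$ using the bound $\sigma^2=s(t-s)/t\ge s/2$ on $(0,t/2]$. The only cosmetic difference is in part~(b): the paper splits the $s$-integral at the fixed point $s=1$ and handles $t\le 2$ separately via part~(a), whereas you split at the crossover $s_\ast$ determined by the norm ratio, which elegantly avoids the case distinction.
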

The proof of Theorem~\ref{prop:most_extended}
is given in Section~\ref{sec:gen_app}, where we use in a crucial way the tensorization of the Gauss-Weierstrass kernel and its bridges. 
Lemma~\ref{cor:gen_neg} and \ref{lem:gen_neg} provide the following conclusion.
\begin{cor}\label{cor:escb}
Under the assumptions of
{\rm Theorem~\ref{prop:most_extended}(a)},
$G$ 
satisfies the sharp local Gaussian bounds \eqref{est:sharp_time}. If $V\le 0$ and the assumptions of
{\rm Theorem~\ref{prop:most_extended}(b)}
hold, then $G$ has the sharp global Gaussian bounds \eqref{est:sharp_uni}.
\end{cor}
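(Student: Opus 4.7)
The plan is to treat Corollary~\ref{cor:escb} as a bookkeeping step that converts the quantitative $S(V)$ control provided by Theorem~\ref{prop:most_extended} into the advertised Gaussian bounds via Lemmas~\ref{cor:gen_neg} and \ref{lem:gen_neg}. Under hypothesis~(a) the target is \eqref{est:sharp_time}; under hypothesis~(b) with $V\le 0$, the target is \eqref{est:sharp_uni}.

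For the first assertion, part~(a) gives $\sup_{x,y\in\Rd} S(V,t,x,y)\le c\,t^{\beta}$ with $\beta:=1-d_1/(2r)-d_2/(2p_2)$. The key preliminary observation is that $\beta>0$: from $r>p_1$ one has $d_1/(2r)<d_1/(2p_1)$, and combining this with the homogeneity $d_1/(2p_1)+d_2/(2p_2)=1$ forces $\beta>0$. Since $|V^\pm|\le|V|$ pointwise and $S$ depends on $V$ only through $|V|$, the same bound controls both $S(V^+,t,x,y)$ and $S(V^-,t,x,y)$. Given any $\eta\in(0,1)$ I set $h:=(\eta/c)^{1/\beta}$, so that $\sup_{0<t\le h,\,x,y\in\Rd} S(V^+,t,x,y)\le\eta$, while $S(V^-,t,x,y)\le c\,T^{\beta}$ on each strip $(0,T]\times\Rd\times\Rd$, hence on bounded subsets of $(0,\infty)\times\Rd\times\Rd$. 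Lemma~\ref{lem:gen_neg} then yields \eqref{gen_est}; restricting to $t\in(0,T]$ produces the uniform two-sided bound $e^{-cT^{\beta}}\le G(t,x,y)/g(t,x,y)\le (1-\eta)^{-(1+T/h)}$, which is exactly \eqref{est:sharp_time}.

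For the second assertion, $V\le 0$ gives $V^+\equiv 0$ and $|V|=-V$, while part~(b) asserts $\|S(V)\|_\infty<\infty$, i.e.\ \eqref{e:sSbi}. Lemma~\ref{cor:gen_neg} then produces the sharp global Gaussian bounds \eqref{est:sharp_uni}. There is no genuine obstacle here since all substantive analysis is packed into Theorem~\ref{prop:most_extended}; the only mild care needed is verifying $\beta>0$ and the pointwise transfer $|V^\pm|\le|V|$ in the derivation of \eqref{est:sharp_time}.
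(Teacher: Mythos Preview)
Your proposal is correct and follows exactly the route the paper indicates: the paper simply remarks that ``Lemma~\ref{cor:gen_neg} and~\ref{lem:gen_neg} provide the following conclusion,'' and you have faithfully unpacked that remark, including the verification that $\beta>0$ and the choice of $h$ needed to feed Lemma~\ref{lem:gen_neg}.
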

Clearly, if $|U|\le |V|$, then $S(U)\le S(V)$. This may be used to
extend
the 
conclusions of Theorem~\ref{prop:most_extended} and Corollary~\ref{cor:escb}
beyond tensor products.

\begin{prop}\label{cor:ce}
For every $d\ge 3$ there is a function $V\le 0$ such that \eqref{est:sharp_uni} holds but
$V\notin L^1(\Rd)\cup \bigcup_{p>1}L^p_{loc}(\Rd)$. 
\end{prop}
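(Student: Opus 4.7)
The plan is to exhibit a nonpositive $V$ as a tensor product and invoke Corollary \ref{cor:escb} through Theorem \ref{prop:most_extended}(b). The trade-off is subtle: (b) demands $V=V_{1}V_{2}$ with $V_{1}\in L^{q}(\RR^{d_{1}})\cap L^{r}(\RR^{d_{1}})$ and $V_{2}\in L^{p_{2}}(\RR^{d_{2}})$, yet $V$ must fail both global $L^{1}$-integrability and local $L^{p}$-integrability for every $p>1$. The crucial observation is to take $d_{2}=1$ and $p_{2}=1$: only $L^{1}(\RR)$ can host a nontrivial function whose $p$-th power fails to be locally integrable for every $p>1$, while simultaneously choosing $r=\infty$ allows $V_{1}$ to be bounded yet globally non-integrable via slow decay.

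Concretely, set $d_{1}=d-1$, $d_{2}=1$, $p_{2}=1$; the scaling relation $\tfrac{d_{1}}{2p_{1}}+\tfrac{d_{2}}{2p_{2}}=1$ then forces $p_{1}=d-1\ge 2$, so the exponents $q=3/2$ and $r=\infty$ satisfy $1\le q<p_{1}<r\le\infty$. For the one-dimensional factor take
\[V_{2}(t)=\frac{1}{t\log^{2}(1/t)}\,\ind_{(0,1/2)}(t),\]
which lies in $L^{1}(\RR)$ by the substitution $u=\log(1/t)$ (the integral becomes $\int_{\log 2}^{\infty}u^{-2}du$), but whose $p$-th power acquires a factor $e^{(p-1)u}$ under the same substitution, forcing $\int_{0}^{1/2}V_{2}(t)^{p}dt=\infty$ for every $p>1$. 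For the $(d-1)$-dimensional factor take
\[V_{1}(y)=|y|^{-(d-1)}\,\ind_{\{|y|>1\}}(y),\]
which is bounded, belongs to $L^{q}(\RR^{d-1})$ for every $q>1$ by a radial calculation, but is not in $L^{1}(\RR^{d-1})$ since $\int_{1}^{\infty}r^{-1}dr=\infty$. Set $V(x_{1},y):=-V_{1}(y)V_{2}(x_{1})\le 0$.

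Theorem \ref{prop:most_extended}(b) then yields $\|S(V)\|_{\infty}<\infty$, and Lemma \ref{cor:gen_neg} upgrades this to the sharp global Gaussian bounds \eqref{est:sharp_uni}. By tensorization $\|V\|_{L^{1}(\Rd)}=\|V_{1}\|_{1}\|V_{2}\|_{1}=\infty$, so $V\notin L^{1}(\Rd)$. For the local failure, pick any $y_{0}\in\RR^{d-1}$ with $|y_{0}|>2$ and let $K=[0,1/2]\times\overline{B}(y_{0},1)$; then
\[\int_{K}|V|^{p}=\Bigl(\int_{0}^{1/2}V_{2}(t)^{p}dt\Bigr)\Bigl(\int_{\overline{B}(y_{0},1)}V_{1}(y)^{p}dy\Bigr)=\infty\]
for every $p>1$ (the first factor is infinite, the second finite and strictly positive), so $V\notin L^{p}_{loc}(\Rd)$ for any such $p$. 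The main obstacle is the exponent calibration: $p_{2}=1$ is the unique choice admitting the local pathology, it forces $d_{2}=1$ to keep the scaling identity solvable with $q<p_{1}<r$, and then $q>1$ is needed so that $V_{1}\notin L^{1}(\RR^{d-1})$ remains compatible with $V_{1}\in L^{q}$.
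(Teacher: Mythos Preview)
Your argument is correct and takes a genuinely different route from the paper. The paper proves the proposition by adding the functions from Example~\ref{ex:drugi} and Example~\ref{ex:czwarty}: the former is an infinite series $-\sum_{n} V_n/(n^2 a_n)$ of compactly supported tensor products with singularities $|x_1|^{-1+1/n}$, each normalized by its own supremum $a_n=\|S(V_n)\|_\infty$, which handles the failure of $L^p_{loc}$ for every $p>1$; the latter is the bounded function $-(|x_2|+1)^{-3}$ with $x_2\in\RR^3$, whose $S$-boundedness comes from Corollary~\ref{rem:prod} together with the three-dimensional identity~\eqref{eq:d_3}, and which handles the $L^1$ failure. Your construction is more economical: a single explicit tensor product achieves both failures at once, by exploiting the endpoint $p_2=1$ in Theorem~\ref{prop:most_extended}(b) so that the one-dimensional factor can carry a logarithmic singularity that is in $L^1$ yet outside every $L^p_{loc}$ for $p>1$, while the $(d-1)$-dimensional factor has critical decay $|y|^{-(d-1)}$ at infinity to kill $L^1$ without leaving $L^{3/2}\cap L^\infty$. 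The paper's approach is more modular---it isolates the two pathologies into separate examples of independent interest and brings in the special role of the Newtonian potential in dimension three---whereas yours uses only Theorem~\ref{prop:most_extended}(b) and avoids the series device entirely.
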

In particular \eqref{est:sharp_uni} does not necessitate $\|V \|_{d/2}<\infty$, i.e.,
the finiteness of $\|\Kz (V)\|_\infty$ does not imply that of  $\|V \|_{d/2}$; see also \eqref{eq:KV} in this connection.
We note in passing that local $L^1$ integrability is necessary for \eqref{est:sharp_time} if $V$ does not change sign, cf. Lemma~\ref{cor:gen_neg} and \ref{l:b}, and Remark~\ref{rem:Vp}.
The function $V$ in Proposition~\ref{cor:ce} is constructed in Section~\ref{sec:e} from highly anisotropic tensor products of power functions.

The structure of the remainder of the paper is as follows.
In Section~\ref{sprel} we provide definitions and preliminaries,
in particular we prove 
Lemma~\ref{cor:gen_neg} and \ref{lem:gen_neg}.
In Section~\ref{sec:proofs} we prove 
Theorem~\ref{thm:J_0}
and
Proposition~\ref{thm:dist}.
In Section~\ref{sec:gen_app} we 
prove Theorem~\ref{prop:most_extended}.
In Section~\ref{sec:e} we prove Proposition~\ref{cor:ce} 
and give examples which illustrate and comment on our results.

\section{Preliminaries}\label{sprel}

We let $\NN=\{1,2,\ldots\}$, $f^+=\max\{0,f\}$ and $f^-=\max\{0,-f\}$. 
Recall that $d\in \NN$ and $V$ is an arbitrary Borel measurable function$:\Rd\to \RR$.

We
begin with
the following observations on
integrability
and potential-boundedness \eqref{e:si} of  functions $V$ which are bridges potential-bounded.
\begin{lem}\label{l:b}
If $S(V,t,x,y)<\infty$ for some $t>0$, $x,y\in\Rd$, then $V\in L^1_{\rm loc}(\Rd)$.
If \eqref{e:sSbt} holds, then
\begin{align}\label{e:si_local}
\sup_{x\in\Rd}\int_0^T \int_{\Rd} g(s,x,z)|V(z)|\,dzds<\infty\,.
\end{align}
If  \eqref{e:sSbi} even holds,
then 
\begin{equation}\label{e:si}
\sup_{x\in\Rd}\int_0^{\infty} \int_{\Rd} g(s,x,z)|V(z)|\,dzds<\infty\,.
\end{equation}
\end{lem}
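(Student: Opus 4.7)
The plan is to prove all three assertions through a common \emph{bridge identity}
\[\frac{g(s,x,z)\,g(t-s,z,y)}{g(t,x,y)} = g\!\left(\frac{s(t-s)}{t},\,z^*_s,\,z\right),\qquad z^*_s := \frac{(t-s)x+sy}{t},\]
obtained by completing the square in $z$.

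For Part 1, fix a compact ball $B\subset\Rd$. On the compact set $s\in[t/4,3t/4]$, $z\in B$, both $g(s,x,z)$ and $g(t-s,z,y)$ are continuous and strictly positive, hence bounded below by some $c_B>0$, so
\[S(V,t,x,y)\ge \frac{c_B^{\,2}\,(t/2)}{g(t,x,y)}\int_B|V(z)|\,dz.\]
Finiteness of $S(V,t,x,y)$ thus forces $V\in L^1(B)$ for every compact $B$, i.e.\ $V\in L^1_{\rm loc}(\Rd)$.

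Specializing the bridge identity to $y=x$ kills the center ($z^*_s\equiv x$); substituting $u=s(t-s)/t$ separately on $s\in[0,t/2]$ and $s\in[t/2,t]$, with $ds=du/\sqrt{1-4u/t}$, yields
\[S(V,t,x,x)=2\int_0^{t/4} h_x(u)\,\frac{du}{\sqrt{1-4u/t}}\;\ge\; 2\int_0^{t/4} h_x(u)\,du,\]
where $h_x(u):=\int_\Rd g(u,x,z)|V(z)|\,dz$. Part 3 is now immediate: under \eqref{e:sSbi} the right-hand side is bounded by $\|S(V)\|_\infty$ for every $t>0$, so sending $t\to\infty$ and taking $\sup_x$ gives \eqref{e:si}.

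For Part 2, applying the same inequality with $t=T$ only yields $\sup_x\int_0^{T/4} h_x(u)\,du\le C/2$, where $C$ is the supremum in \eqref{e:sSbt}; but the conclusion requires $u$ to range up to $T$, and since the bridge variance $s(t-s)/t$ tops out at $t/4$, closing this gap is the main obstacle. I propose to bridge it by two elementary steps: (i) the pointwise comparison $g(u_0,x,z)\le 2^{d/2}g(u,x,z)$ for $u\in[u_0,2u_0]$, read off directly from the Gauss--Weierstrass formula, which when averaged over $u\in[T/8,T/4]$ gives $\sup_x h_x(T/8)\le 2^{d/2+2}C/T$; and (ii) the semigroup identity $h_x(u)=\int g(u-T/8,x,w)\,h_w(T/8)\,dw$ valid for $u>T/8$, which propagates (i) to $h_x(u)\le\sup_w h_w(T/8)$ for all $u\ge T/8$. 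Combining, $\sup_x\int_0^T h_x(u)\,du\le C/2+(3T/4)\cdot 2^{d/2+2}C/T<\infty$, which is \eqref{e:si_local}.
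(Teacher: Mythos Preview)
Your argument is correct, but for Parts~2 and~3 the paper takes a considerably shorter route. Instead of specializing to the diagonal $y=x$, the paper \emph{integrates} $S(V,t,x,y)g(t,x,y)$ over $y\in\Rd$: since $\int_\Rd g(t-s,z,y)\,dy=1$, this yields the exact identity
\[
\int_{\Rd} S(V,t,x,y)\,g(t,x,y)\,dy \;=\; \int_0^t\!\int_{\Rd} g(s,x,z)\,|V(z)|\,dz\,ds,
\]
from which both \eqref{e:si_local} (take $t=T$) and \eqref{e:si} (let $t\to\infty$) follow in one line. Your choice $y=x$ forces the bridge variance $s(t-s)/t$ to top out at $t/4$, which is why you then need the pointwise Gaussian comparison and the semigroup propagation step to reach the full interval $[0,T]$; averaging over $y$ avoids this detour entirely. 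On the other hand, your bridge identity and change of variables make the structure of $S(V,t,x,x)$ explicit, which is informative in its own right even if not needed here. Your treatment of Part~1 is essentially the same as the paper's.
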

\begin{proof}
The first statement follows, because
$g(t,x,y)$ is locally bounded from below on $(0,\infty)\times\Rd\times\Rd$ (see
\cite[Lemma~3.7]{MR3713578} for a
quantitative general result).
Since
$\int_\Rd S(V,t,x,y)g(t,x,y)\,dy=\int_0^t \int_{\Rd} g(s,x,z)|V(z)|\,dzds$,
we see that
\eqref{e:sSbt} implies \eqref{e:si_local}
and
\eqref{e:sSbi} implies \eqref{e:si}.
\end{proof}

We shall 
use the following functions:
\begin{align*}
f(t)&=\sup_{x,y\in\Rd}S(V,t,x,y)\,,\qquad t\in(0,\infty),\\
F(t)&=\sup_{0<s<t}f(s)=\sup_{\substack{x,y\in\Rd\\ 0<s< t}}S(V,s,x,y)\,,\qquad t\in(0,\infty]\,.
\end{align*}
We fix $V$ and $x,y\in \Rd$. For $0<\varepsilon<t$, we consider
\begin{align*}
S(V,t-\varepsilon,x,y)=\int_0^t \int_{\Rd} \frac{g(s,x,z)g(t-\varepsilon-s,z,y)}{g(t-\varepsilon,x,y)}|V(z)|\,{\bf 1}_{[0,t-\varepsilon]}(u)\,dzds.
\end{align*}
By Fatou's lemma we get
$$
S(V,t,x,y)\le \liminf_{\varepsilon\to 0} S(V,t-\epsilon,x,y),
$$
meaning that $(0,\infty)\ni t\mapsto S(V,t,x,y)$ is lower semicontinuous on the left.
It follows that
$f$ is lower semi-continuous on the left, too.
In consequence, $f(t)\leq F(t)$ and $F(t)=\sup_{0<s\leq t} f(s)$ for $0<t<\infty$.

We next claim that $f$ is sub-additive, that is,
\begin{align}\label{ineq:S_chapm}
f(t_1+t_2)\leq f(t_1)+f(t_2)\,,\qquad t_1,\,t_2>0\,.
\end{align}
This follows from the Chapman-Kolmogorov equations for $g$.
Indeed, we have $S(V,t_1+t_2,x,y)=I_1+I_2$, where
\begin{align*}
I_1&=\int_0^{t_1} \int_{\Rd} \frac{g(s,x,z)g(t_1+t_2-s,z,y)}{g(t_1+t_2,x,y)}|V(z)|\,dzds\\
&=\int_0^{t_1} \int_\Rd\int_{\Rd} \frac{g(s,x,z)g(t_1-s,z,w)g(t_2,w,y)g(t_1,x,w)}{g(t_1+t_2,x,y)g(t_1,x,w)}|V(z)|\,dwdzds\\
&\le \int_\Rd \frac{g(t_2,w,y)g(t_1,x,w)}{g(t_1+t_2,x,y)} S(V,t_1,x,w)\,dw\le f(t_1)\,,
\end{align*}
and $I_2$ equals
\begin{align*}
&\int_{t_1}^{t_1+t_2} \int_{\Rd} \frac{g(s,x,z)g(t_1+t_2-s,z,y)}{g(t_1+t_2,x,y)}|V(z)|\,dzds
\\
&=\int_{t_1}^{t_1+t_2} \!\!\!\! \int_\Rd\int_{\Rd} \frac{g(t_1,x,w)g(s-t_1,w,z)g(t_2-(s-t_1),z,y)g(t_2,w,y)}{g(t_1+t_2,x,y)g(t_2,w,y)}|V(z)|\,dwdzds\\
&\le \int_\Rd \frac{g(t_1,x,w)g(t_2,w,y)}{g(t_1+t_2,x,y)} S(V,t_2,w,y)\,dw
\le f(t_2)\,.
\end{align*}
This yields \eqref{ineq:S_chapm}.
\begin{lem}\label{lemaF(t)}
For all $t,h>0$ we have
$f(t)
\leq
F(h)+ t\, f(h)/h.
$
\end{lem}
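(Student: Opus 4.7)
The inequality is a variant of Fekete's subadditive lemma applied to the function $f$, using that subadditivity \eqref{ineq:S_chapm} was already established before the statement. The plan is to perform Euclidean division of $t$ by $h$ and iterate subadditivity.

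Fix $t,h>0$ and let $n=\lfloor t/h\rfloor$ and $r=t-nh\in[0,h)$. First consider the case $r=0$, i.e.\ $t=nh$ with $n\in\NN$. Iterating \eqref{ineq:S_chapm} $n-1$ times yields $f(nh)\le nf(h)$. Since $n=t/h$, this gives
\[
f(t)\le (t/h)\,f(h)\le F(h)+(t/h)\,f(h),
\]
as $F(h)\ge 0$. Next consider the case $0<r<h$. If $n=0$, simply $f(t)=f(r)\le F(h)$ since $r<h$, which already gives the claim. If $n\ge 1$, apply \eqref{ineq:S_chapm} repeatedly to the decomposition $t=\underbrace{h+\cdots+h}_{n}+r$ to obtain
\[
f(t)\le nf(h)+f(r).
\]
Since $0<r<h$ we have $f(r)\le F(h)$ by definition of $F$, and $n\le t/h$, so
\[
f(t)\le (t/h)\,f(h)+F(h),
\]
which is the desired bound.

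There is no real obstacle here: the only subtle points are (i) making sure the definition of $F(h)$ as a supremum over the open interval $(0,h)$ still covers the remainder $r$, which it does as soon as $r>0$, and (ii) handling the degenerate case $r=0$ separately so that $f(0)$ never needs to be interpreted. Both are taken care of by the case split above, after which the bound follows immediately from the Chapman--Kolmogorov subadditivity already proved.
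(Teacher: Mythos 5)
Your proof is correct and follows essentially the same route as the paper: Euclidean division of $t$ by $h$ followed by iterated use of the subadditivity \eqref{ineq:S_chapm}, together with $f,F\ge 0$. The only cosmetic difference is that the paper writes $t=\theta+(k-1)h$ with $\theta\in(0,h]$ and invokes the earlier observation $F(h)=\sup_{0<s\le h}f(s)$ (from left lower semicontinuity), whereas you take the remainder in $[0,h)$ and handle the boundary cases by a split, so you never need that refinement of the definition of $F$.
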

\begin{proof}
Let $k\in \NN$ be such that $(k-1)h<t\leq kh$, and let $\theta=t-(k-1)h$.
Then $t=\theta+(k-1)h$, and by
\eqref{ineq:S_chapm} we get
$$
f(t)\leq f(\theta)+ t\,f(h)/h \,\leq F(h)+t\, f(h)/h\,,
$$
since $0<\theta\leq h$.
\end{proof}
\begin{cor}\label{cor:ineq_most}
$F(t)\leq F(h)+ t\, F(h)/h$ and $F(2t)\le 2F(t)$ for $t,h>0$.
\end{cor}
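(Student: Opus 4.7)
The plan is to derive both inequalities directly from Lemma~\ref{lemaF(t)} and the sub-additivity \eqref{ineq:S_chapm} that have just been established. For the first inequality, I would start from Lemma~\ref{lemaF(t)}, which gives $f(s)\leq F(h)+s\,f(h)/h$ for all $s,h>0$. Taking the supremum over $s\in(0,t]$, and recalling that $F(t)=\sup_{0<s\leq t}f(s)$, yields $F(t)\leq F(h)+t\,f(h)/h$. Since $f(h)\leq F(h)$ (again from the definition of $F$, noting that the supremum includes $s=h$), this upgrades to $F(t)\leq F(h)+t\,F(h)/h$, as claimed.

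For the second inequality $F(2t)\leq 2F(t)$, I would write an arbitrary $s>0$ as $s=s/2+s/2$ and apply the sub-additivity \eqref{ineq:S_chapm} to get $f(s)\leq 2f(s/2)$. When $s\in(0,2t]$ one has $s/2\in(0,t]$, hence $f(s/2)\leq F(t)$, so $f(s)\leq 2F(t)$. Taking the supremum over $s\in(0,2t]$ yields $F(2t)\leq 2F(t)$.

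There is no genuine obstacle here: both bounds are immediate consequences of tools that were proved just above---Lemma~\ref{lemaF(t)} for the first inequality and the sub-additivity \eqref{ineq:S_chapm} for the second---combined with the definition of $F$ as a supremum and the trivial bound $f\leq F$. The only mild point to keep track of is that $F$ is defined via $\sup_{0<s\leq t}$ (with the endpoint $s=t$ included, as noted earlier in the text), so that $f(h)\leq F(h)$ and the suprema above are taken over the correct intervals.
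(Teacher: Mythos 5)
Your proof is correct and matches the intended derivation: the paper states the corollary without proof precisely because it follows, as you show, by taking the supremum over $s\in(0,t]$ in Lemma~\ref{lemaF(t)} (together with $f(h)\le F(h)$) for the first bound, and by applying the sub-additivity \eqref{ineq:S_chapm} with $t_1=t_2=s/2$ for the second. The only nuance is that $f(h)\le F(h)$ and $F(t)=\sup_{0<s\le t}f(s)$ are not literally ``from the definition'' (which uses $\sup_{0<s<t}$) but are the consequences of the left lower semicontinuity established just before, which you correctly acknowledge.
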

We may now prove 
Lemma~\ref{lem:gen_neg} and Lemma~\ref{cor:gen_neg}.

\begin{proof}[Proof of Lemma~\ref{lem:gen_neg}]
If $V\le 0$ then $0\le G\le g$ is constructed in \cite[p. 470]{MR1978999}, and the Duhamel formula follows from  the finiteness of $S(V^-)$ and the discussion after \cite[(3.3)]{MR1978999}.
Then the left-hand side of \eqref{gen_est} follows from \cite[pp. 467-468]{MR1978999}, or we can use
\cite[(41)]{MR2457489}, which results therein from Jensen's inequality and the second displayed formula on page 252 of \cite{MR2457489}.
For general, i.e., signed $V$ the kernel $G$ is constructed by applying the above procedure to $g$ and $-V^-$, and then perturbing the resulting kernel by $V^+$. The latter is done by means of the perturbation series, cf. \cite[Lemma 2]{MR2457489};
then the Duhamel formula obtains without further conditions.
We now  prove the right hand side of \eqref{gen_est}, 
and without loss of generality we may assume that $V\ge 0$. 
 For $0<s<t$, $x,y\in\mathbb R^d$, we let $p_0(s,x,t,y)=g(t-s,x,y)$ and $p_n(s,x,t,y)=\int_s^t \int_{\mathbb R^d} p_{n-1}(s,x,u,z)V(z)p_0(u,z,t,y)\, dz\, du$, $n\in \NN$. Let $Q:\mathbb R\times \mathbb R\to [0,\infty)$ satisfy $Q(u,r)+Q(r,v)\leq Q(u,v)$. By \cite[Theorem 1]{MR2507445} (see also \cite[Theorem~3]{MR3000465}) if there is $0<\eta <1$ such that
\begin{equation}\label{condition1}
p_1(s,x,t,y)\leq [\eta + Q(s,t)]p_0(s,x,t,y),
\end{equation}
then
\begin{equation}\label{eq2}
\tilde p(s,x,t,y):=\sum_{n=0}^\infty p_n(s,x,t,y)\leq \Big(\frac{1}{1-\eta}\Big)^{1+\frac{Q(s,t)}{\eta}}p_0(s,x,t,y)\,. \end{equation}
Corollary \ref{cor:ineq_most} and the assumptions of the lemma imply that (\ref{condition1}) is satisfied with $
\eta=F(h)< 1$ and
$Q(s,t)=(t-s)F(h)/h
$. Since $G(t,x,y)=\tilde p(0,x,t,y)$, the proof of \eqref{gen_est} is complete (see also \cite[(17)]{MR2457489}).
\end{proof}
\begin{proof}[Proof of Lemma~\ref{cor:gen_neg}]
If \eqref{est:sharp_time} holds then Duhamel formula and nonnegativity of $G$ yield \eqref{e:sSbt}.
Similarly, \eqref{est:sharp_uni} implies \eqref{e:sSbi}. The reverse implications follow from \eqref{gen_est}.
\end{proof}

As a consequence of Corollary~\ref{cor:ineq_most} we also obtain the following result.
\begin{cor}\label{prop:lower_exp}
Let $V\leq 0$ and $T>0$.
Then \eqref{est:sharp_time} holds
if and only if
\begin{align}\label{ineq:lower_exp}
C e^{-ct} g(t,x,y)\leq G(t,x,y) \,,\qquad t>0,\,x,y\in\Rd\,,
\end{align}
for some constants $C$ and $c$.
In fact we can take
$$\ln C=-\sup_{\substack{x,y\in\Rd\\ 0<t\leq T}}S(V,t,x,y) \qquad {\rm and}\qquad c=\frac1T \sup_{x,y\in\Rd} S(V,T,x,y)\,.$$
\end{cor}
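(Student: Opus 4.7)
The plan is to exploit the dichotomy already built into the excerpt: since $V \leq 0$, the Duhamel formula forces $G \leq g$, so the upper half of \eqref{est:sharp_time} is automatic with constant $1$, and \eqref{est:sharp_time} reduces to the lower bound $c_1 g \leq G$ on $(0,T] \times \Rd \times \Rd$. The same observation shows that the right-hand inequality in \eqref{ineq:lower_exp} is the only non-trivial content there as well. Thus the whole Corollary is about how bridge-potential boundedness on a finite time interval automatically upgrades to a pointwise exponential lower bound valid for all $t > 0$.

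For the easy direction, assume \eqref{ineq:lower_exp}. Restricting $t$ to $(0,T]$ yields $G(t,x,y) \geq Ce^{-cT} g(t,x,y)$, and combining with $G \leq g$ gives \eqref{est:sharp_time} with $c_1 = Ce^{-cT}$ and $c_2 = 1$.

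For the substantive direction, I assume \eqref{est:sharp_time}. By Lemma~\ref{cor:gen_neg} this is equivalent to $F(T) := \sup_{0 < s \leq T,\, x,y} S(V,s,x,y) < \infty$. Now the subadditivity machinery developed above kicks in: by Lemma~\ref{lemaF(t)} applied with $h = T$, one has, for every $t > 0$,
\[
f(t) \leq F(T) + \frac{t\, f(T)}{T},
\]
where $f(t) = \sup_{x,y \in \Rd} S(V,t,x,y)$. Since $V \leq 0$, we apply Lemma~\ref{lem:gen_neg} with $V^+ \equiv 0$ (so $\eta = 0$) and $V^- = |V|$ to obtain
\[
\frac{G(t,x,y)}{g(t,x,y)} \geq e^{-S(|V|,t,x,y)} \geq e^{-f(t)} \geq e^{-F(T)} \exp\!\left(-\frac{t\, f(T)}{T}\right) = C e^{-ct},
\]
with exactly $\ln C = -F(T) = -\sup_{0<t\leq T,\, x,y}S(V,t,x,y)$ and $c = f(T)/T = \tfrac{1}{T}\sup_{x,y}S(V,T,x,y)$, as claimed.

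The only delicate point is that the bound on $S(V)$ is assumed only on the finite slab $(0,T]$, while \eqref{ineq:lower_exp} must hold for all $t > 0$; this gap is precisely what the sub-additivity inequality \eqref{ineq:S_chapm} (which rests on the Chapman--Kolmogorov identity for $g$) is designed to bridge, so no further work beyond invoking Lemma~\ref{lemaF(t)} and Lemma~\ref{lem:gen_neg} should be needed. I expect the proof to be short; the main thing to verify carefully is that the constants produced by the chain above match the explicit values stated in the corollary, rather than a slightly weaker pair such as $c = F(T)/T$.
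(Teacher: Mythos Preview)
Your proof is correct and follows essentially the same route as the paper: the easy direction by restricting $t$ to $(0,T]$ and using $G\le g$, and the main direction by chaining the lower bound $G/g\ge e^{-S(V,t,x,y)}$ from Lemma~\ref{lem:gen_neg} with the subadditivity estimate $f(t)\le F(T)+t\,f(T)/T$ from Lemma~\ref{lemaF(t)}, yielding exactly the stated constants. The paper's proof is the same chain of inequalities, just written in one line.
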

\begin{proof}
\eqref{ineq:lower_exp} implies \eqref{est:sharp_time} for every fixed $T>0$. Conversely, if \eqref{est:sharp_time} holds for fixed $T>0$,
then by Lemma~\ref{lem:gen_neg} and \ref{lemaF(t)} we have
$$\frac{G(t,x,y)}{g(t,x,y)}\geq
e^{-S(V,t,x,y)}\ge
e^{-f(t)}\geq e^{-F(T)} e^{-tf(T)\slash T}.$$
\end{proof}
\noindent
We note in passing that the above proof shows that \eqref{est:sharp_time} is determined by the behavior of $\sup_{x,y\in\Rd}S(V,t,x,y)$ for small $t>0$.
We also see that \eqref{e:si} and thus \eqref{e:sSbi}
fail in dimensions $d=1$ and $d=2$, because then
$\int_0^{\infty} g(s,x,z)ds\equiv \infty$, unless 
$V=0$ $a.e.$
From Lemma~\ref{cor:gen_neg} and Remark~\ref{rem:Vp} it follows that
\eqref{est:sharp_uni} fails for
nontrivial $V\le 0$ and for nontrivial $V\ge 0$ if $d=1$ or $2$.

\section{Characterization of the sharp global Gaussian estimates}\label{sec:proofs}

In this section we prove our main result, i.e., Theorem~\ref{thm:J_0}. We start by using $N(V,t)$, \eqref{U} and \eqref{L}, to estimate $S(V,t)$.
\begin{lem}\label{lem:upr}
Let $t>0$. We have
\begin{align*}
\int_0^{t/2}\int_{\Rd}\frac{e^{-|z-y+(\tau/t)(y-x)|^2/(4\tau)}}{\tau^{d/2}}|V(z)|\,dzd\tau  \leq N(V,t,x,y)\,,\quad x,y\in \Rd\,,
\end{align*}
and
\begin{align*}
\sup_{x,y}N(V,t,x,y) \leq 2 \,\sup_{x,y} \int_0^{t/2}\int_{\Rd}\frac{e^{-|z-y+(\tau/t)(y-x)|^2/(4\tau)}}{\tau^{d/2}}|V(z)|\,dzd\tau\,.
\end{align*}
\end{lem}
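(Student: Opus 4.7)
The first inequality is immediate, since the integrand defining the second summand of $N(V,t,x,y)$ is non-negative.

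For the second inequality the plan is to exhibit the symmetry $N(V,t,x,y)=N(V,t,y,x)$ by direct change of variables, and then split the supremum. Specifically, in the second integral in the definition \eqref{def:N} I substitute $\sigma=t-\tau$, so $d\tau=-d\sigma$ and the range $\tau\in(t/2,t)$ becomes $\sigma\in(0,t/2)$. Inside the exponent, the vector
\[
z-y+\frac{\tau}{t}(y-x)=z-y+\frac{t-\sigma}{t}(y-x)=z-x+\frac{\sigma}{t}(x-y),
\]
and the denominator $(t-\tau)^{d/2}$ becomes $\sigma^{d/2}$. Hence the second summand of $N(V,t,x,y)$ equals
\[
\int_0^{t/2}\!\!\int_{\Rd}\frac{e^{-|z-x+(\sigma/t)(x-y)|^2/(4\sigma)}}{\sigma^{d/2}}\,|V(z)|\,dz\,d\sigma,
\]
which is precisely the first summand of $N(V,t,y,x)$.

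Writing $I(V,t,x,y)$ for the first summand of $N(V,t,x,y)$, we have shown
\[
N(V,t,x,y)=I(V,t,x,y)+I(V,t,y,x).
\]
Taking the supremum over $x,y\in\Rd$ and using $\sup(A+B)\le \sup A+\sup B$ together with the obvious equality $\sup_{x,y}I(V,t,y,x)=\sup_{x,y}I(V,t,x,y)$, we obtain
\[
\sup_{x,y}N(V,t,x,y)\le 2\sup_{x,y}I(V,t,x,y),
\]
which is the second claim. The only slightly non-routine point is recognizing the precise form of the affine translation of $z$ after the substitution $\sigma=t-\tau$, but this is a one-line algebraic check.
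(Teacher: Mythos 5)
Your proof is correct and follows essentially the same route as the paper: the first bound is immediate from non-negativity of the second summand, and the second bound comes from the substitution $\sigma=t-\tau$ showing that the second summand of $N(V,t,x,y)$ equals the first summand with $x$ and $y$ interchanged, after which taking suprema gives the factor $2$. Nothing to add.
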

\begin{proof}
The first inequality follows by the definition of $N(V,t,x,y)$.
For the proof of the second one we note that
\begin{align*}
\int_{t/2}^t\int_{\Rd} &\frac{e^{-|z-y+(\tau/t)(y-x)|^2/(4(t-\tau))}}{(t-\tau)^{d/2}} |V(z)|dzd\tau\\
&\qquad = \int_0^{t/2}\int_{\Rd} \frac{e^{-|z-x+(\tau/t)(x-y)|^2/(4\tau)}}{\tau^{d/2}} |V(z)|dzd\tau\,.
\end{align*}
\end{proof}

\begin{lem}\label{lem:J_0}
We have
\begin{align*}
\sup_{t>0,\,x,y\in\Rd} S(V,t,x,y) \geq m_1(4\pi)^{d/2} e_*(V,0)
\,,
\end{align*}
and
\begin{align*}
\sup_{t>0,\,x,y\in\Rd} S(V,t,x,y)\leq 2\, m_2 (4\pi)^{d/2} 
e_*(V,0)
\,.
\end{align*}
\end{lem}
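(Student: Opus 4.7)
The plan is to reduce everything to the truncated integral
\[
I(V,t,x,y):=\int_0^{t/2}\int_{\Rd}\tau^{-d/2}e^{-|z-y+(\tau/t)(y-x)|^2/(4\tau)}|V(z)|\,dzd\tau,
\]
which was singled out in Lemma~\ref{lem:upr}, and then to identify
\[
\sup_{t>0,\,x,y\in\Rd} I(V,t,x,y)=(4\pi)^{d/2}\,e_*(V,0).
\]
Granting this identity, Lemma~\ref{lem:upr} gives $I\le N\le 2I$ pointwise, and the inequalities \eqref{L}, \eqref{U} give $m_1 N(V,t/2,x,y)\le S(V,t,x,y)\le m_2 N(V,t,x,y)$. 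Taking suprema immediately yields
\[
m_1(4\pi)^{d/2}e_*(V,0)\le \|S(V)\|_\infty\le 2m_2(4\pi)^{d/2}e_*(V,0),
\]
which are exactly the two claimed bounds.

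To establish the identity for $\sup I$, the first step is a parameter change: setting $a=y$ and $b=(x-y)/t$, one computes $|z-y+(\tau/t)(y-x)|^2=|(z-a)-\tau b|^2$, so that extending the $\tau$-integral to $(0,\infty)$ produces exactly $J(z-a,b)$. For the upper bound, since the integrand is nonnegative,
\[
I(V,t,x,y)\le \int_{\Rd} J(z-y,(x-y)/t)|V(z)|\,dz,
\]
and as $t,x,y$ vary freely, the pair $(a,b)=(y,(x-y)/t)$ ranges over all of $\Rd\times\Rd$ independently, so taking the supremum on the right gives $(4\pi)^{d/2}e_*(V,0)$ by definition of $e_*(V,0)$.

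For the matching lower bound, I would fix $a,b\in\Rd$, choose $y=a$ and $x=a+tb$ so that $(x-y)/t=b$, and let $t\to\infty$. Then
\[
I(V,t,a+tb,a)=\int_0^{t/2}\int_{\Rd}\tau^{-d/2}e^{-|z-a-\tau b|^2/(4\tau)}|V(z)|\,dzd\tau
\]
increases monotonically to $\int_{\Rd}J(z-a,b)|V(z)|\,dz$ by monotone convergence. Taking the sup over $a,b$ yields $\sup I\ge (4\pi)^{d/2}e_*(V,0)$, completing the identification.

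The main (mild) obstacle is the bookkeeping showing that $(y,(x-y)/t)$ really does sweep out $\Rd\times\Rd$ as $(t,x,y)$ varies, and that the $\tau$-truncation at $t/2$ does not lose anything in the supremum — both issues are resolved by the substitution above and the monotone convergence argument. Everything else is a direct application of the already-established inequalities \eqref{L}, \eqref{U} and Lemma~\ref{lem:upr}.
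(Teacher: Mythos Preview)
Your proof is correct and follows essentially the same route as the paper's: both reduce to the truncated integral $I$, make the substitution $w=(y-x)/t$ (your $b=-w$), and use monotone convergence in $t$ to recover the full $J$-integral defining $e_*(V,0)$. One small slip: Lemma~\ref{lem:upr} does \emph{not} give $N\le 2I$ pointwise---only $\sup_{x,y}N\le 2\sup_{x,y}I$---but since you immediately take suprema this does not affect the argument.
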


\begin{proof}
By \eqref{L} and Lemma~\ref{lem:upr},
\begin{align*}
\sup_{t>0,\,x,y\in\Rd}& S(V,t,x,y)\geq m_1 \sup_{t>0,\,x,y\in\Rd} N(|V|,t/2,x,y)\\
&\geq m_1 \sup_{t>0,\,x,y\in\Rd} \int_0^{t/4}\int_{\Rd}\frac{e^{-|z-y+(2\tau/t)(y-x)|^2/(4\tau)}}{\tau^{d/2}}|V(z)|\,dzd\tau\\
&= m_1\sup_{t>0,\,y,w\in\Rd} \int_0^{t/4}\int_{\Rd}\frac{e^{-|z-y+\tau w|^2/(4\tau)}}{\tau^{d/2}}|V(z)|\,dzd\tau\\
&= m_1 \sup_{y,w\in\Rd} \int_{\Rd} J(z-y,w) |V(z)|\,dz=m_1(4\pi)^{d/2}  e_*(V,0)\,.
\end{align*}
By \eqref{U} and Lemma~\ref{lem:upr},
\begin{align*}
\sup_{t>0,\,x,y\in\Rd}& S(V,t,x,y)\leq m_2\sup_{t>0,\, x,y\in\Rd} N(V,t,x,y)\\
&\leq 2\, m_2 \sup_{t>0,\, x,y\in\Rd} \int_0^{t/2}\int_{\Rd}\frac{e^{-|z-y+(\tau/t)(y-x)|^2/(4\tau)}}{\tau^{d/2}}|V(z)|\,dzd\tau\\
&\leq 2\, m_2 \sup_{t>0,\, y,w\in\Rd} \int_0^{t/2}\int_{\Rd}\frac{e^{-|z-y+\tau w|^2/(4\tau)}}{\tau^{d/2}}|V(z)|\,dzd\tau\\
&= 2 \, m_2 \sup_{y,w\in\Rd} \int_{\Rd} J (z-y,w) |V(z)|\,dz
=2 \, m_2(4\pi)^{d/2}  e_*(V,0)\,.
\end{align*}
\end{proof}

\begin{proof}[Proof of Theorem~\ref{thm:J_0}]

For $x,w\in \Rd$ and $\tau>0$ we have
$$
\frac{|x-\tau w|^2}{4\tau}= \frac{|x|^2}{4\tau} - \frac{x\cdot w}{2}+ \frac{\tau|w|^2 }{4} \,.
$$
We change the variables $\tau =  4u/|w|^2 $ and use \cite[8.432, formula 6.]{MR3307944} to get
\begin{align*}
J(x,w)=
\int_0^{\infty} \tau^{-d/2} e^{-\frac{|x-\tau w|^2}{4\tau}} d\tau
= 
2 e^{\frac{x\cdot w}{2}} \left(\frac{|x|}{|w|}\right)^{-d/2+1}  K_{d/2-1}\left(\frac{|x||w|}{2}\right)\,.
\end{align*}
Here, as usual, $K_{\nu}$ is the modified Bessel function of the second kind.
We claim that for each $\nu \geq 1/2$,
$$
K_{\nu}(z)\approx z^{-\nu} e^{-z} (1+z)^{\nu-1/2}\,, \qquad z>0\,.
$$
Here $\approx$ means that the ratio of both sides is bounded above and below by constants independent of $z$.
The comparison is obtained by putting $x=1$ in \cite[8.432, formulas 9. and 8.]{MR3307944} and considering cases $z\le1$ and $z> 1$, correspondingly.
Therefore,
\begin{align*}
J(x,w)&\approx  e^{-\left(|x||w|-x\cdot w \right)/2}
 \left(\frac{|x|}{|w|}\right)^{-d/2+1}
 \left(\frac{|x||w|}{2}\right)^{-d/2+1}\left(1+\frac{|x||w|}{2}\right)^{d/2-3/2}\\
 &\approx K(x,w)\,,
\end{align*}
and so $K(V)\approx e_*(V,0)$. The result follows by Lemma~\ref{lem:J_0}.
\end{proof}

\begin{proof}[Proof of \eqref{eq:KV}]
The left hand side inequality follows from the identity
$\Kz  (V, x,0)= C_d^{-1} (-\Delta^{-1})|V|(x)$.
If $y=0$, then the upper bound trivially holds. For $y\neq 0$
we consider two domains of integration. We have
\begin{align*}
\int_{|z-x||y|\leq 1} &\Kz (z-x,y)|V(z)|\,dz
\leq 2^{(d-3)/2} \int_{|z-x||y|\leq 1} \frac{1}{|z-x|^{d-2}} |V(z)|dz\\
&\leq \frac{2^{(d-3)/2}}{C_d} |\!|\Delta^{-1}|V| |\!|_{\infty}\,.
\end{align*}
Furthermore, by a change of variables and the H{\"o}lder inequality,
\begin{align*}
&\int_{|z-x||y|\geq 1} \Kz (z-x,y)|V(z)|dz\\
&\leq 2^{(d-3)/2} \!\!\! \int_{|z-x||y|\geq 1}
\frac{e^{-\frac1{2}\left(|z-x||y|-(z-x)\cdot y \right)}}{(|z-x||y|)^{(d-1)/2}} |y|^{d-2}|V(z)|dz
\leq 2^{(d-3)/2}\kappa_d  |\!|V |\!|_{d/2}
\,,
\end{align*}
where
$$
\kappa_d=\left(
\int_{|w|>1} 
\left(e^{-\frac1{2}(|w|-w_1 )}|w|^{-(d-1)/2}\right)^{d/(d-2)} dw \right)^{(d-2)/d}<\infty \,.
$$
We skip the proof of the finiteness of $\kappa_d$; it can be found in the first version of the paper on arXiv: 1706.06172v1.
\end{proof}

\begin{proof}[Proof of Proposition~\ref{thm:dist}]
We first prove that $\|\Kz (V)\|_\infty=\infty$.
Let $y=(1,\mathbf 0)\in \mathbb R^d$.
For $z=(z_1, \mathbf z_2)\in A$ we have
$$ 0\leq |z||y|-z\cdot y=|z|-z_1 = \frac{|\mathbf z_2|^2}{\sqrt{z_1^2+|\mathbf z_2|^2}+z_1}\leq \frac{z_1}{\sqrt{z_1^2+|\mathbf z_2|^2}+z_1}\leq 1 $$
and thus also
$ z_1\le |z| \le 2z_1$. Then,
\begin{align*}
\|\Kz (V)\|_\infty&\geq \int_{\mathbb R^d} e^{-\frac{1}{2}(|z||y|-z\cdot y)}|V(z)|\frac{(1+|z||y|)^{\frac{d}{2}-\frac{3}{2}}}{|z|^{d-2}}\, d z
\geq c\int_A \frac{1}{z_1}\frac{z_1^{\frac{d}{2}-\frac{3}{2}}}{z_1^{d-2}}\, d z\\
&=c\int_{4}^\infty \int_{|\mathbf z_2|<\sqrt{z_1}} z_1^{-1+2-d+\frac{d}{2}-\frac{3}{2}}\, d\mathbf z_2\, dz_1\\
&=c\int_{4}^\infty  z_1^{-1+2-d+\frac{d}{2}-\frac{3}{2}+\frac{1}{2}(d-1)}\, dz_1
=\infty. 
\end{align*}
We now prove that $\|\Delta^{-1} |V| \|_{\infty}<\infty$.
By the symmetric rearrangement inequality (see \cite[Chapter~3]{MR1817225}) we have
\begin{align*}
\sup_{x\in \mathbb R^d}\int_{\mathbb R^d} \frac{1}{|z-x|^{d-2}} |V(z)|\, d z
= \sup_{x_1\in \mathbb R} \int_4^{\infty} \int_{\RR^{d-1}} \frac{\ind_{|\mathbf z_2|<\sqrt{z_1}}}{[(z_1-x_1)^2+|\mathbf z_2|^2\,]^{(d-2)/2}}\frac1{z_1}d\mathbf z_2\,dz_1
\end{align*}
We
 only need to show that the following three integrals are uniformly bounded for $x_1\geq 4$.
The first integral is
\begin{align*}
I_1&=\int_{x_1+\sqrt{x_1}}^\infty  \int_{|\mathbf z_2|<\sqrt{z_1}}
\frac{1}{|z_1-x_1|^{d-2}+|\mathbf z_2|^{d-2}} \frac{1}{z_1}\, d\mathbf z_2\, dz_1\\ 
& \leq \int_{x_1+\sqrt{x_1}}^\infty  \int_{|\mathbf z_2|<\sqrt{z_1}}
\frac{1}{|z_1-x_1|^{d-2}} \frac{1}{z_1}\, d\mathbf z_2\, dz_1\\
&= c \int_{x_1+\sqrt{x_1}}^\infty  
\frac{1}{|z_1-x_1|^{d-2}} \frac{1}{z_1} z_1^{\frac{1}{2}(d-1)}\, dz_1
 = c  \int_{\sqrt{x_1}}^\infty
\frac{1}{z_1^{d-2}} \ (z_1+x_1)^{\frac{d}{2}-\frac{3}{2}}\, dz_1\\
&\leq c \int_{\sqrt{x_1}}^{x_1}
\frac{1}{z_1^{d-2}} \ x_1^{\frac{d}{2}-\frac{3}{2}}\, dz_1 
+c \int_{x_1}^\infty 
\frac{1}{z_1^{d-2}} \ z_1^{\frac{d}{2}-\frac{3}{2}}\, dz_1 \leq c<\infty.
\end{align*}
The second integral we consider is
\begin{align*}
I_2 & = \int_2^{x_1-\sqrt{x_1}}  \int_{|\mathbf z_2|<\sqrt{z_1}}
\frac{1}{{|z_1-x_1|^{d-2}+|\mathbf z_2|^{d-2}}} \frac{1}{z_1}\, d\mathbf z_2\, dz_1\\
& \leq
\int_2^{x_1-\sqrt{x_1}}  \int_{|\mathbf z_2|<\sqrt{z_1}}
\frac{1}{{|z_1-x_1|^{d-2}}} \frac{1}{z_1}\, d\mathbf z_2\, dz_1
=c \int_2^{x_1-\sqrt{x_1}}  
\frac{1}{{(x_1-z_1)^{d-2}}} z_1^{\frac{d}{2} -\frac{3}{2}}\,  dz_1\\
&= c \int_{\sqrt{x_1}}^{x_1-2} \frac{1}{w^{d-2}} (x_1-w)^{\frac{d}{2}-\frac{3}{2}}\, dw
\leq c \int_{\sqrt{x_1}}^{x_1-2} \frac{1}{w^{d-2}} x_1^{\frac{d}{2}-\frac{3}{2}}\, dw
\le c<\infty.
\end{align*}
The remaining integral is
\begin{align*}
I_3
&=\int_{x_1-\sqrt{x_1}}^{x_1+\sqrt{x_1}}  \int_{|\mathbf z_2|<\sqrt{z_1}}
\frac{1}
{[(z_1-x_1)^2+|\mathbf z_2|^2\,]^{(d-2)/2}}\frac1{z_1}d\mathbf z_2\,dz_1\\
&\leq  2\int_{x_1-\sqrt{x_1}}^{x_1+\sqrt{x_1}}  \int_{|\mathbf z_2|<2\sqrt{x_1}}
\frac{1}
{[(z_1-x_1)^2+|\mathbf z_2|^2\,]^{(d-2)/2}}\frac1{z_1}d\mathbf z_2\,dz_1
\\
&\leq 2 \int_{B(0,\, 3 \sqrt{x_1})}\frac{1}{{|z|^{d-2}}} \frac{1}{x_1}\, d z
\leq c< \infty.
\end{align*} 
To prove the second statement of Proposition~\ref{thm:dist}, for $s>0$ we let 
$
{\it d}_s 
f(x)=sf(\sqrt{s}x)$. Note that the dilatation does not change the norms:
$$
\| \Delta^{-1}({\it d}_s f)  \|_{\infty}=\| \Delta^{-1} f \|_{\infty}\,, \qquad \|\Kz({\it d}_s f)\|_{\infty }=\|K(f)\|_{\infty}\,. 
$$
Furthermore,
${\rm supp} ({\it d}_s f) \subseteq B(0,r/\sqrt{s})$ if ${\rm supp} (f)\subseteq B(0,r)$, $r>0$.
Since $\|\Delta^{-1}V\|_{\infty}=C<\infty$ and $\|\Kz (V)\|_\infty=\infty$,
therefore $\|\Delta^{-1} (V\ind_{B_r})\|_{\infty}\leq C$  for every $r>0$ and $\|K(V\ind_{B_r})\|_{\infty }\to \infty$ as $r\to \infty$.
For $n\in \NN$ we define $$V_n={\it d}_{r_n^2}(V\ind_{B_{r_n}}) \,,$$ 
 where $r_n$ is chosen such that 
$
\|K(V\ind_{B_{r_n}})\|_{\infty }\geq 4^n
$.
Also, ${\rm supp}(V_n)\subseteq B(0,1)$.
We define $\tilde{V}=\sum_{n=1}^{\infty}V_n/2^n$. Then,
$$
\|K(\tilde{V})\|_{\infty }\geq \|K(V_n)\|_{\infty }/2^n\geq 2^n \to \infty\,,
$$
as $n \to \infty$,
and
$$
\|\Delta^{-1}\tilde{V}\|_{\infty}\leq \sum_{n=1}^{\infty} \|\Delta^{-1}V_n\|_{\infty}/2^n \leq C\,.
$$
\end{proof}
Similarly,  \eqref{est:sharp_uni} fails for $-\varepsilon \tilde V\ge 0$ with any $\varepsilon >0$, cf. 
Remark~\ref{rem:Vp}.

\section{Sufficient conditions for
the sharp Gaussian estimates}\label{sec:gen_app}
Recall from \cite[(2.5)]{MR549115} that for $p\in [1,\infty]$,
\begin{align*}
\| P_t f\|_{\infty} \leq C(d,p) \,t^{-d/(2p)} \|f \|_p\, ,\qquad t>0\,,
\end{align*}
where $P_t f(x)=\int_{\Rd} g(t,x,z)f(z)dz$, $f\in L^p(\Rd)$ and
$$
C(d,p)=\begin{cases} (4\pi)^{-d/2}, \quad & \mbox{ if \ \ } p=1\,,\\ (4\pi)^{-d/(2p)} (1-p^{-1})^{(1-p^{-1})d/2}, & \mbox{ if \ \ } p\in (1,\infty]\,. \end{cases}
$$
We will give an analogue for the {\it bridges} $T^{t,y}_s$. Here $t>0$, $y\in\Rd$, and
$$
T^{t,y}_s f (x) = \int_{\Rd} \frac{g(s,x,z)\,g(t-s,z,y)}{g(t,x,y)} f(z) \,dz\,,\qquad 0<s<t,\quad x\in\Rd\,.
$$
Clearly,
\begin{equation}\label{e:sT}
T^{t,y}_s f (x) =
T^{t,x}_{t-s} f (y),\qquad 0<s<t,\quad x,y\in\Rd\,.
\end{equation}
By  the Chapman-Kolmogorov equations (the semigroup property) for the kernel $g$, we have $T^{t,y}_s 1=1$.
We also note that $S(V)$ is related to the potential ($0$-resolvent) operator of $T$ as follows,
$$
S(V,t,x,y)=\int_0^t T^{t,y}_s\, |V|(x)\,ds\,.
$$
\begin{lem}
\label{lem:Lp}
For $p\in [1,\infty]$ and $f\in L^p(\Rd)$ we have
\begin{align*}
\| T^{t,y}_s f\|_{\infty} \leq C(d,p) \,\left[\frac{(t-s)s}{t}\right]^{-d/(2p)} \|f \|_p\,,\qquad 0<s<t,\,y\in\Rd\,.
\end{align*}
\end{lem}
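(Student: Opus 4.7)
The plan is to reduce the bridge operator $T^{t,y}_s$ to the standard heat semigroup $P_\sigma$ at a shifted point, and then invoke the stated $L^p\to L^\infty$ bound for $P_\sigma$.

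First I would explicitly compute the bridge density. Writing out $g(s,x,z)$, $g(t-s,z,y)$ and $g(t,x,y)$ as Gaussians and canceling the prefactors, one gets
\[
\frac{g(s,x,z)g(t-s,z,y)}{g(t,x,y)}=\left(\frac{t}{4\pi s(t-s)}\right)^{d/2}\exp\!\left(-\tfrac{1}{4s}|z-x|^2-\tfrac{1}{4(t-s)}|z-y|^2+\tfrac{1}{4t}|y-x|^2\right).
\]
Completing the square in $z$ with weights $a=1/s$, $b=1/(t-s)$ gives
\[
a|z-x|^2+b|z-y|^2=(a+b)\left|z-\tfrac{ax+by}{a+b}\right|^2+\tfrac{ab}{a+b}|x-y|^2,
\]
and since $a+b=t/[s(t-s)]$ and $ab/(a+b)=1/t$, the $|x-y|^2$ term cancels the $|y-x|^2/(4t)$ contribution exactly.

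Setting $\sigma=s(t-s)/t$ and $\mu=((t-s)x+sy)/t$, the bridge density reduces to $g(\sigma,\mu,z)$. Consequently,
\[
T^{t,y}_s f(x)=\int_{\mathbb R^d} g(\sigma,\mu,z)\,f(z)\,dz=P_\sigma f(\mu).
\]
Taking the supremum in $x$ (equivalently in $\mu$) and applying the cited $L^p\to L^\infty$ bound
$\|P_\sigma f\|_\infty\le C(d,p)\,\sigma^{-d/(2p)}\|f\|_p$ yields
\[
\|T^{t,y}_s f\|_\infty\le C(d,p)\,\left[\tfrac{s(t-s)}{t}\right]^{-d/(2p)}\|f\|_p,
\]
which is exactly the claim. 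There is no real obstacle here: the entire argument is the classical fact that the Gauss--Weierstrass bridge at time $s$ between $x$ and $y$ over $[0,t]$ is itself a Gaussian with mean on the line segment joining $x$ and $y$ and variance $s(t-s)/t$. The only thing to be careful about is bookkeeping in the completion of the square, so that the $|x-y|^2$ terms cancel precisely; this is exactly where the factor $t/[s(t-s)]$ appears and produces the sharp exponent $d/(2p)$ in the final bound.
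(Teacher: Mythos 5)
Your proof is correct, and it takes a genuinely different (and in fact more streamlined) route than the paper. The paper never identifies the bridge density exactly: it only uses the one-sided inequality $\frac{|z-x|^2}{4s}+\frac{|y-z|^2}{4(t-s)}\ge\frac{|y-x|^2}{4t}$ (via triangle and Cauchy--Schwarz) to settle the case $p=1$, then treats $1<p<\infty$ separately by H\"older's inequality together with the identity $g(s,x,z)^{p'}=g(s/p',x,z)(4\pi s)^{(1-p')d/2}(p')^{-d/2}$ and the semigroup property, and finally handles $p=\infty$ via $T^{t,y}_s 1=1$. You instead complete the square exactly and observe that the bridge kernel \emph{equals} $g(\sigma,\mu,z)$ with $\sigma=s(t-s)/t$ and $\mu=((t-s)x+sy)/t$, so that $T^{t,y}_s f(x)=P_\sigma f(\mu)$ and all values of $p\in[1,\infty]$ follow at once from the $L^p\to L^\infty$ bound for $P_\sigma$ that the paper quotes from Carmona at the start of Section~4 (with the same constant $C(d,p)$, so the stated inequality is reproduced verbatim). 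What your approach buys is a single unified argument and the structural fact that the Gaussian bridge marginal is itself a Gaussian with the effective time $s(t-s)/t$; what the paper's approach buys is a self-contained derivation that does not lean on the quoted semigroup estimate beyond its role elsewhere, and whose intermediate inequality \eqref{e:ti} is reused conceptually in the surrounding discussion. Your computation of $a+b=t/[s(t-s)]$, $ab/(a+b)=1/t$ and the exact cancellation of the $|x-y|^2$ terms is correct, so there is no gap.
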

\begin{proof}
We note that
$$\frac{g(s,x,z)\,g(t-s,z,y)}{(4\pi)^{-d/2}g(t,x,y)}=\left[\frac{(t-s)s}{t}\right]^{-\frac{d}{2}}
\exp\left[ -
\frac{|z-x|^2}{4s} -\frac{|y-z|^2}{4(t-s)} +\frac{|y-x|^2}{4t}
\right].
$$
As in \cite[(3.4)]{MR1457736}, we have
\begin{equation}\label{e:ti}
\frac{|z-x|^2}{4s}+\frac{|y-z|^2}{4(t-s)}\ge \frac{|y-x|^2}{4t}.
\end{equation}
Indeed, (\ref{e:ti}) follows from  the triangle  and Cauchy-Schwarz inequalities:
\begin{align*}
|y-x|&\le \sqrt{s}\frac{|z-x|}{\sqrt{s}}+\sqrt{t-s}\frac{|y-z|}{\sqrt{t-s}}
\le \sqrt{t}\left(
\frac{|z-x|^2}{s}+\frac{|y-z|^2}{t-s}
\right)^{1/2}.
\end{align*}
For $p=1$, the assertion of the lemma results from \eqref{e:ti}.
For $p\in (1,\infty)$, we let $p'=p/(p-1)$, apply H{\"o}lder's inequality, 
the identity
$g(s,x,z)^{p'}=g(s/p',x,z)(4\pi s)^{(1-p')d/2}(p')^{-d/2}$, and the semigroup property, to get
 \begin{equation*}\begin{split}
|T_s^{t,y}f(x)| &  \leq g(t,x,y)^{-1}\Big[\int_\Rd g(s,x,z)^{p'}g(t-s,z,y)^{p'}\, dz\Big]^{1\slash p'}\| f\|_p \\
&= g(t,x,y)^{-1} \Big[ (4\pi)^{(1-p')d}\,(p')^{-d}\, [s(t-s)]^{(1-p')d/2} \\
&\qquad \qquad \qquad \, \int_\Rd g(s/p',x,z)g((t-s)/p',z,y)\, dz \Big]^{1\slash p'}\| f\|_p\\
  &=C(d,p)\Big[\frac{s(t-s)}{t}\Big]^{-d\slash (2p)}\| f\|_p.
\end{split}\end{equation*}
For $p=\infty$, the assertion  follows from the identity $T^{t,y}_s 1=1$.
\end{proof}

Zhang \cite[Proposition~2.1]{MR1978999}
showed that \eqref{est:sharp_uni} and \eqref{est:sharp_time} hold for $V$ in specific $L^p$ spaces (see also \cite[Theorem~1.1 and Remark~1.1]{MR1978999}).
In Proposition~\ref{prop:Zhang_most} and Corollary~\ref{cor:esc} below we prove his result by a different method.
\begin{prop}\label{prop:Zhang_most}
Let $V\colon\Rd\to\RR$ and $p,q\in [1,\infty]$.
\begin{enumerate}
\item[\rm (a)]
If $V\in L^p(\Rd)$, $p> d/2$ and $c= C(d,p) \frac{[\Gamma(1-d/(2p))]^2}{\Gamma(2-d/p)} \|V \|_p$, then
\begin{align*}
\sup_{x,y\in\Rd} S(V,t,x,y)
\leq c\,t^{1-d/(2p)}\,, \qquad t>0\,.
\end{align*}
\item[\rm(b)] If $V\in L^p(\Rd)\cap L^q(\Rd)$ and $q<d/2<p$, then \eqref{e:sSbi} holds.
\end{enumerate}
\end{prop}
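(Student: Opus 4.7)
The proof plan rests on the identity $S(V,t,x,y)=\int_0^t T^{t,y}_s|V|(x)\,ds$ recalled just before Lemma~\ref{lem:Lp}, combined with the bridge $L^p\!\to\!L^\infty$ estimate of that lemma. For part~(a), I would apply Lemma~\ref{lem:Lp} directly to obtain
$$
S(V,t,x,y)\le C(d,p)\|V\|_p\int_0^t\Bigl[\tfrac{(t-s)s}{t}\Bigr]^{-d/(2p)}ds.
$$
The substitution $s=tu$ turns the integral into $t^{1-d/(2p)}\int_0^1[u(1-u)]^{-d/(2p)}du$, and the last integral is the Beta function $B\bigl(1-\tfrac{d}{2p},1-\tfrac{d}{2p}\bigr)=\Gamma(1-d/(2p))^2/\Gamma(2-d/p)$. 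The integral converges exactly when $d/(2p)<1$, i.e., under the assumption $p>d/2$, and this yields the claimed constant $c$ verbatim.

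For part~(b), the idea is to split the $s$-integral and apply Lemma~\ref{lem:Lp} with exponent $p$ where $(t-s)s/t$ is small and with exponent $q$ where it is large. Using the symmetry $T^{t,y}_s|V|(x)=T^{t,x}_{t-s}|V|(y)$ from \eqref{e:sT}, it suffices to bound the integral over $s\in(0,t/2)$, on which $s/2\le (t-s)s/t\le s$. For $t\le 2$, part~(a) already gives $S(V,t,x,y)\le c\,t^{1-d/(2p)}\le c\,2^{1-d/(2p)}$. For $t\ge 2$, I would fix the threshold $s_0=1$ and write
$$
\int_0^{t/2}\!T^{t,y}_s|V|(x)\,ds \le C(d,p)\|V\|_p\int_0^{1}(s/2)^{-d/(2p)}ds + C(d,q)\|V\|_q\int_1^{t/2}(s/2)^{-d/(2q)}ds.
$$
The first integral converges thanks to $p>d/2$ (exponent $>-1$), while the second is bounded uniformly in $t$ thanks to $q<d/2$ (exponent $<-1$).

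Assembling the two halves, $S(V,t,x,y)$ is bounded by a constant depending only on $d$, $p$, $q$, $\|V\|_p$ and $\|V\|_q$, giving \eqref{e:sSbi}. The main technical point to watch is the matching of the two regimes at $s_0=1$ and the verification of $t$-independence; everything else is routine integration. A minor alternative would be to split at $s_0=\min(1,t/2)$ to unify the small-$t$ and large-$t$ arguments, but the separation above is more transparent.
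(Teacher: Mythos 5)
Your proposal is correct and follows essentially the same route as the paper: part (a) is Lemma~\ref{lem:Lp} plus the Beta-function evaluation of $\int_0^t[(t-s)s/t]^{-d/(2p)}ds$, and part (b) uses the symmetry \eqref{e:sT} to reduce to $s\in(0,t/2)$, splits at $s=1$ with exponents $p$ and $q$ on the two ranges, and invokes (a) for small $t$ — exactly the paper's decomposition \eqref{e:dT} and estimate \eqref{eq:jpq}. No gaps.
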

\begin{proof}
Part ${\rm(a)}$ follows from Lemma~\ref{lem:Lp}, so we proceed to ${\rm(b)}$.
For $t>2$,
\begin{align}\label{e:dT}
\int_0^t T^{t,y}_s |V|(x)\,ds = \int_0^{t/2} T^{t,y}_s |V|(x)\,ds+\int_0^{t/2} T^{t,x}_s |V|(y)\,ds\,.
\end{align}
By Lemma~\ref{lem:Lp}, the first term of the sum can be estimated as follows:
\begin{align}
\int_0^{t/2}  \!\!\!\!T^{t,y}_s |V|(x)\,ds
&\leq c\, \| V \|_p \int_0^1 \left[\frac{(t-s)s}{t}\right]^{-d/(2p)} \!\!\!\! \!\!\!\!\!\!\!\!ds+c\, \| V \|_q\int_1^{t/2} \left[\frac{(t-s)s}{t}\right]^{-d/(2q)}   \!\!\!\!\!\!\!\!\!\!\!\!ds
\nonumber
\\
&\leq  c'\, \| V \|_p \int_0^1 s^{-d/(2p)} ds+c'\, \| V \|_q\int_1^{\infty} s^{-d/(2q)}ds.
\label{eq:jpq}
\end{align}
By \eqref{e:sT}, the second term
has the same bound.
For $t\in (0,2]$ we use ${\rm(a)}$.
\end{proof}

Lemma~\ref{cor:gen_neg} and \ref{lem:gen_neg} provide the following conclusion:
\begin{cor}\label{cor:esc}
Under the assumptions of
{\rm Proposition~\ref{prop:Zhang_most}(a)}, $G$ 
satisfies the
sharp local Gaussian bounds \eqref{est:sharp_time}. 
If $V\le 0$ and the assumptions of
{\rm Proposition~\ref{prop:Zhang_most}(b)}
hold, then $G$ has the sharp global Gaussian bounds \eqref{est:sharp_uni}.
\end{cor}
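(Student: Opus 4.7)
The plan is to derive Corollary~\ref{cor:esc} as a direct consequence of the quantitative bounds from Proposition~\ref{prop:Zhang_most} and the general transfer results Lemma~\ref{cor:gen_neg} and Lemma~\ref{lem:gen_neg}. The key observation is that Proposition~\ref{prop:Zhang_most} is stated for generic $V$ but the quantity $S(V,t,x,y)$ depends only on $|V|$, so the same bounds apply to $V^+$ and $V^-$ whenever they apply to $V$.

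For part (a), I would start from the estimate
$$\sup_{x,y\in\Rd}S(V,t,x,y)\le c\,t^{1-d/(2p)},\qquad t>0,$$
supplied by Proposition~\ref{prop:Zhang_most}(a). Since $p>d/2$, the exponent $1-d/(2p)$ is positive, so this bound tends to $0$ as $t\to 0^+$. In particular, for any prescribed $0<\eta<1$ one may fix $h>0$ so small that $c\,h^{1-d/(2p)}\le \eta$; by monotonicity in $t$ of the right-hand side on $(0,h]$ this yields
$$\sup_{0<t\le h,\,x,y\in\Rd} S(V^+,t,x,y)\le \sup_{0<t\le h,\,x,y\in\Rd} S(V,t,x,y)\le \eta<1.$$
The same inequality applied to $V^-$ shows that $S(V^-)$ is bounded on bounded subsets of $(0,\infty)\times\Rd\times\Rd$. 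Hence Lemma~\ref{lem:gen_neg} applies and furnishes
$$e^{-c\,t^{1-d/(2p)}}\le \frac{G(t,x,y)}{g(t,x,y)}\le \Bigl(\tfrac{1}{1-\eta}\Bigr)^{1+t/h},$$
for all $t>0$, $x,y\in\Rd$. Restricting to $0<t\le T$ gives uniform constants $c_1,c_2$ and therefore \eqref{est:sharp_time}.

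For part (b), the assumption $V\le 0$ means $V^+\equiv 0$, so the direction to prove is just the existence of a global bound on $S(V)$. Proposition~\ref{prop:Zhang_most}(b) supplies exactly \eqref{e:sSbi}, and Lemma~\ref{cor:gen_neg} (for $V\le 0$) converts this bound on bridge potentials into the sharp global Gaussian bounds \eqref{est:sharp_uni}.

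I do not expect any serious obstacle; the corollary is essentially a bookkeeping exercise connecting the analytic estimates of Proposition~\ref{prop:Zhang_most} with the functional formulations of the sharp Gaussian bounds encoded in Lemma~\ref{cor:gen_neg} and Lemma~\ref{lem:gen_neg}. The only mildly delicate point is making sure that in part (a) the quantity $S(V^+,t,x,y)$ (which is what Lemma~\ref{lem:gen_neg} demands to be smaller than $1$) is controlled by $S(|V|,t,x,y)$, but this is immediate from $V^+\le|V|$ and the monotone dependence of $S$ on $|V|$.
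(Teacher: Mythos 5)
Your proposal is correct and follows exactly the route the paper intends: the paper gives no separate argument beyond citing Lemma~\ref{cor:gen_neg} and Lemma~\ref{lem:gen_neg} together with Proposition~\ref{prop:Zhang_most}, and your write-up simply fills in the same bookkeeping (smallness of $S(V^+)$ for small $t$ since $1-d/(2p)>0$, local boundedness of $S(V^-)$, then \eqref{gen_est} restricted to $t\le T$ for part (a), and \eqref{e:sSbi} plus Lemma~\ref{cor:gen_neg} for part (b)). No gaps.
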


Recall from Section~\ref{sec:prel}
that 
\eqref{est:sharp_uni} holds 
 for $d=3$ if $\|\Delta^{-1} V^- \|_{\infty}<\infty$ and $\|\Delta^{-1} V^+ \|_{\infty}<1$, and 
it holds  for $d\geq 4$ if
$\|\Delta^{-1} V^- \|_{\infty}+\|V^-\|_{d/2}<\infty$, $\|\Delta^{-1} V^+ \|_{\infty}<1$ and
$\|V^+\|_{d/2}$ is small enough. 
This yields another proof of the second statement of Corollary~\ref{cor:esc}, because of the following observation:
\begin{lem}\label{lem:Zhang_Lisk_Sem}
The
assumptions of {\rm Proposition~\ref{prop:Zhang_most}${\rm(b)}$}
necessitate that
$d\geq 3$, 
 $\|\Delta^{-1} |V| \|_{\infty}<\infty$ and $\|V\|_{d/2}<\infty$.
\end{lem}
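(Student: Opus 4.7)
The plan is to verify the three conclusions in turn, each by a short standard argument.

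First, the requirement $1 \le q < d/2$ immediately forces $d/2 > 1$, hence $d \ge 3$; this is the easy step.

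Second, to deduce $\|V\|_{d/2} < \infty$ I would use log-convexity of $L^r$-norms. Since $q < d/2 < p$, there is $\theta \in (0,1)$ with $2/d = \theta/p + (1-\theta)/q$, and H\"older's inequality yields
\[
\|V\|_{d/2} \le \|V\|_p^{\theta}\,\|V\|_q^{1-\theta} < \infty.
\]
In the extreme cases $p = \infty$ or $q = 1$ this reduces to the obvious monotonicity/interpolation inequalities.

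Third, and the main computation, is to bound the Newtonian potential
\[
\|\Delta^{-1}|V|\|_\infty = C_d \,\sup_{x\in\Rd} \int_{\Rd} \frac{|V(z)|}{|z-x|^{d-2}}\,dz.
\]
I would split the integral at $|z-x| = 1$. On the near region $\{|z-x| < 1\}$, H\"older's inequality with exponents $p, p'$ bounds the integral by $\|V\|_p$ times the $L^{p'}$-norm of $|z-x|^{-(d-2)}$ over the unit ball; this norm is finite precisely when $(d-2)p' < d$, i.e.\ $p > d/2$. On the far region $\{|z-x| \ge 1\}$, H\"older with exponents $q, q'$ gives a bound by $\|V\|_q$ times the $L^{q'}$-norm of $|z-x|^{-(d-2)}$ outside the unit ball, which is finite iff $(d-2)q' > d$, i.e.\ $q < d/2$. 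The boundary cases $p=\infty$, $q=1$ are again handled trivially.

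The main ``obstacle'' is really just the observation that the two integrability thresholds for the Riesz kernel $|z-x|^{2-d}$ on the unit ball and on its complement are exactly $p > d/2$ and $q < d/2$, respectively, so the hypothesis of Proposition~\ref{prop:Zhang_most}(b) is sharp for this argument and the lemma follows as a bookkeeping consequence of H\"older and interpolation.
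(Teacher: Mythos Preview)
Your proof is correct and follows essentially the same approach as the paper: the paper also notes that $q<d/2$ forces $d\ge3$ and that $V\in L^{d/2}$ follows by interpolation, and then bounds the Newtonian potential by the identical splitting at $|z-x|=1$ together with H\"older's inequality with exponent $p$ on the unit ball and $q$ on its complement. Your write-up is slightly more explicit in verifying the integrability thresholds $(d-2)p'<d$ and $(d-2)q'>d$ and in spelling out the interpolation step, but the argument is the same.
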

\begin{proof}
Plainly, the assumptions of {\rm Proposition~\ref{prop:Zhang_most}${\rm(b)}$} imply
$d>2$ and $V\in L^{d/2}(\Rd)$. We now verify that \mbox{$\|\Delta^{-1} |V| \|_{\infty}<\infty$.} By H\"older's inequality,
\begin{align*}
\sup_{x\in\Rd}\int_{B(0,1)} \frac{|V(z+x)|}{|z|^{d-2}} dz\leq \||z|^{2-d}\ind_{B(0,1)}(z) \|_{p'}\, \|V \|_{p}<\infty\,,\\
\sup_{x\in\Rd}\int_{B^c(0,1)} \frac{|V(z+x)|}{|z|^{d-2}} dz\leq \||z|^{2-d}\ind_{B^c(0,1)}(z) \|_{q'}\, \|V \|_{q}<\infty\,,
\end{align*}
where $p',q'$ are the exponents conjugate to $p,q$, respectively.
\end{proof}

In what follows, we develop
sufficient conditions for \eqref{est:sharp_uni} and \eqref{est:sharp_time}.
Let $d_1,d_2\in \NN$ and $d=d_1+d_2$.
\begin{rem}\label{rem:obs}
\rm
The Gauss-Weierstrass kernel $g(t,x)$ in $\Rd$ can be represented as a tensor product:
\begin{align*}
g(t,x)= (4\pi t)^{-d_1/2} e^{-|x_1|^2/(4t)}\, (4\pi t)^{-d_2/2} e^{-|x_2|^2/(4t)}\,,
\end{align*}
where $x_1\in\RR^{d_1}$, $x_2\in\RR^{d_2}$ and $x=(x_1,x_2)$.
The kernels of the bridges factorize accordingly:
\begin{align*}
&\frac{g(s,x,z)\,g(t-s,z,y)}{g(t,x,y)}\\
&=\frac{(4\pi s)^{-d_1/2} e^{-|z_1-x_1|^2/(4s)}(4\pi (t-s))^{-d_1/2} e^{-|y_1-z_1|^2/(4(t-s))}}{(4\pi t)^{-d_1/2} e^{-|y_1-x_1|^2/(4t)}}\\
&\times\frac{(4\pi s)^{-d_2/2} e^{-|z_2-x_2|^2/(4s)} (4\pi (t-s))^{-d_2/2} e^{-|y_2-z_2|^2/(4(t-s))}}{(4\pi t)^{-d_2/2} e^{-|y_2-x_2|^2/(4t)}}.
\end{align*}
\end{rem}

\begin{cor}\label{rem:prod}
Let $V_1\colon \RR^{d_1}\to \RR$, $V_2\colon\RR^{d_2}\to \RR$,
and $V(x)=V_1(x_1)V_2(x_2)$, where $x=(x_1,x_2)\in \Rd$, $x_1\in \RR^{d_1}$ and $x_2\in \RR^{d_2}$.
Assume that $V_1\in L^{\infty}(\RR^{d_1})$ and $\sup_{t>0,\,x_2,y_2\in\RR^{d_2}} S(V_2,t,x_2,y_2)<\infty$.
Then \eqref{e:sSbi} holds.
\end{cor}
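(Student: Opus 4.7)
The plan is to exploit the tensor product structure recorded in Remark~\ref{rem:obs} in a direct way. Writing $g_i$ for the Gauss--Weierstrass kernel on $\RR^{d_i}$ ($i=1,2$), the remark gives
\[
\frac{g(s,x,z)\,g(t-s,z,y)}{g(t,x,y)}
=\frac{g_1(s,x_1,z_1)\,g_1(t-s,z_1,y_1)}{g_1(t,x_1,y_1)}\cdot\frac{g_2(s,x_2,z_2)\,g_2(t-s,z_2,y_2)}{g_2(t,x_2,y_2)},
\]
for $x=(x_1,x_2)$, $y=(y_1,y_2)$, $z=(z_1,z_2)$. Since $|V(z)|=|V_1(z_1)|\,|V_2(z_2)|$, Fubini's theorem allows the inner $dz$-integral in $S(V,t,x,y)$ to split as a product over $\RR^{d_1}$ and $\RR^{d_2}$.

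Next, I would interpret the $\RR^{d_1}$ factor as the bridge operator for $g_1$ acting on $|V_1|$, namely
\[
\int_{\RR^{d_1}}\frac{g_1(s,x_1,z_1)\,g_1(t-s,z_1,y_1)}{g_1(t,x_1,y_1)}|V_1(z_1)|\,dz_1
= T^{t,y_1}_s|V_1|(x_1).
\]
Because $T^{t,y_1}_s 1=1$ (the Chapman--Kolmogorov identity applied on $\RR^{d_1}$, see \eqref{e:sT} and the discussion preceding Lemma~\ref{lem:Lp}), this factor is bounded above by $\|V_1\|_\infty$ uniformly in $s,t,x_1,y_1$.

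Combining these two steps gives, for every $t>0$ and $x,y\in\Rd$,
\[
S(V,t,x,y)\le \|V_1\|_\infty\int_0^t\!\!\int_{\RR^{d_2}}\frac{g_2(s,x_2,z_2)\,g_2(t-s,z_2,y_2)}{g_2(t,x_2,y_2)}|V_2(z_2)|\,dz_2\,ds
=\|V_1\|_\infty\,S(V_2,t,x_2,y_2).
\]
Taking the supremum over $t>0$ and $x,y\in\Rd$ and using the assumption that $S(V_2)$ is bounded globally yields \eqref{e:sSbi}. There is essentially no obstacle: the only thing to verify carefully is the product form $|V(z)|=|V_1(z_1)|\,|V_2(z_2)|$ used in the Fubini step and the fact that $T^{t,y_1}_s$ preserves constants, both of which are immediate.
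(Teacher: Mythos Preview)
Your argument is correct and is precisely the one outlined in the paper: factorize the bridge kernel via Remark~\ref{rem:obs}, bound the $\RR^{d_1}$-factor by $\|V_1\|_\infty$ using $T^{t,y_1}_s 1=1$ (Chapman--Kolmogorov), and recognize what remains as $S(V_2,t,x_2,y_2)$. The paper's proof is a one-sentence sketch of exactly these steps, so you have simply written out the details.
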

\begin{proof}
In estimating $S(V,t,x,y)$ we first use the factorization of the bridges and the boundedness of $V_1$, and then the Chapman-Kolmogorov equations and the boundedness of $S(V_2)$.
\end{proof}
\begin{lem}
\label{lem:Lp1_Lp2}
For $f (x_1,x_2)=f_1(x_1)f_2(x_2) \in L^{p_1}(\RR^{d_1})\times L^{p_2}(\RR^{d_2})$, $0<s<t$ and $y\in\Rd$,
we have
\begin{align*}
\| T^{t,y}_s f\|_{\infty} \leq C(d_1,p_1)\, C(d_2,p_2) \left[\frac{(t-s)s}{t}\right]^{-d_1/(2p_1)-d_2/(2p_2)} \|f_1 \|_{p_1} \|f_2 \|_{p_2}\,.
\end{align*}
\end{lem}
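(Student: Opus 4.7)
The plan is to exploit the tensorization of the Gauss–Weierstrass kernel already recorded in Remark~\ref{rem:obs} and then reduce the claim to Lemma~\ref{lem:Lp} applied in each factor separately. Writing $x=(x_1,x_2)$, $y=(y_1,y_2)$, $z=(z_1,z_2)$ with $x_i,y_i,z_i\in\RR^{d_i}$, and denoting by $g^{(d_i)}$ the $d_i$-dimensional Gauss–Weierstrass kernel, Remark~\ref{rem:obs} gives the factorization
\begin{equation*}
\frac{g(s,x,z)\,g(t-s,z,y)}{g(t,x,y)}
=\prod_{i=1}^{2}\frac{g^{(d_i)}(s,x_i,z_i)\,g^{(d_i)}(t-s,z_i,y_i)}{g^{(d_i)}(t,x_i,y_i)}.
\end{equation*}
Since $f(z)=f_1(z_1)f_2(z_2)$, Fubini's theorem immediately yields
\begin{equation*}
T^{t,y}_{s}f(x)=\bigl(T^{t,y_1}_{s,d_1}f_1\bigr)(x_1)\cdot\bigl(T^{t,y_2}_{s,d_2}f_2\bigr)(x_2),
\end{equation*}
where $T^{t,y_i}_{s,d_i}$ denotes the corresponding bridge operator in dimension $d_i$.

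Next I would apply Lemma~\ref{lem:Lp} in each dimension separately: for $i=1,2$,
\begin{equation*}
\bigl\|T^{t,y_i}_{s,d_i}f_i\bigr\|_{\infty}\le C(d_i,p_i)\left[\frac{(t-s)s}{t}\right]^{-d_i/(2p_i)}\|f_i\|_{p_i}.
\end{equation*}
Taking the supremum over $x_1$ and $x_2$ independently, the product of these two inequalities gives exactly the asserted bound on $\|T^{t,y}_{s}f\|_{\infty}$.

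There is essentially no obstacle here; the argument is a one-line consequence of Remark~\ref{rem:obs} combined with Lemma~\ref{lem:Lp}. The only point worth emphasizing is that the normalizing factor $(t-s)s/t$ appearing in Lemma~\ref{lem:Lp} is the same in both dimensions, so the exponents add to $-d_1/(2p_1)-d_2/(2p_2)$, as required, and the constants $C(d_i,p_i)$ multiply. No cross-term or Hölder-type coupling between the two factors is needed, precisely because $f$ is a tensor product and the bridge kernel factorizes in the same fashion.
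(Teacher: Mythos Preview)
Your proposal is correct and follows essentially the same approach as the paper: the paper's proof simply reads ``We proceed as in the proof of Lemma~\ref{lem:Lp}, using Remark~\ref{rem:obs},'' which amounts to exactly the tensorization-and-factorwise-estimate argument you wrote out. If anything, your version is slightly cleaner in that you apply Lemma~\ref{lem:Lp} as a black box in each factor rather than re-running its proof.
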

\begin{proof}
We proceed as in the proof of Lemma~\ref{lem:Lp}, using Remark~\ref{rem:obs}.
\end{proof}

\begin{proof}[Proof of Theorem~\ref{prop:most_extended}]
We follow
the proof of Proposition~\ref{prop:Zhang_most}, replacing Lemma~\ref{lem:Lp}
by Lemma~\ref{lem:Lp1_Lp2}.\end{proof}
We note in passing that Theorem~\ref{prop:most_extended} is an extension of
Proposition~\ref{prop:Zhang_most}.

\section{Examples}\label{sec:e}
Let $\ind_A$ denote the indicator function of $A$. In what follows, $G$ in \eqref{est:sharp_uni} is the Schr{\"o}dinger perturbation of $g$ by $V$.

\begin{example}\label{thm:Ld/2}
Let $d\geq 3$ and $1<p<\infty$.
For $x_1\in  \RR$, $x_2\in \RR^{d-1}$ we let
$V(x_1,x_2)=-
|x_1|^{-1/p}\ind_{|x_1|<1}\ind_{|x_2|<1}$.
Then \eqref{est:sharp_uni} holds but $V \notin L^{p}_{loc}(\Rd)$.
\end{example}
\noindent
Indeed,
$V(x_1,x_2)=V_1(x_1) V_2(x_2)$, where
\begin{align*}
V_1(x_1)&=-|x_1|^{-1/p} \ind_{|x_1|<1},\qquad x_1\in  \RR,\\
V_2(x_2)&=\ind_{|x_2|<1}, \qquad x_2\in \RR^{d-1}.
\end{align*}
Let
$$
1\le q<p_1<r<p,
$$
and
$$
p_2=\frac{d-1}{2}\frac{p_1}{p_1-1/2}.
$$
Since $d\ge 3$, $p_2>1$.
In the notation of Theorem~\ref{prop:most_extended} we have
\mbox{$d_1=1$}, \mbox{$d_2=d-1$}, and indeed
\mbox{$d_1/(2p_1)+d_2/(2p_2)=1$}.
Since $V_1\in L^r(\RR)\cap L^{q}(\RR)$ and $V_2\in L^{p_2}(\RR^{d-1})$,
the assumptions of Theorem~\ref{prop:most_extended}${\rm(b)}$ are satisfied,
and \eqref{est:sharp_uni} follows
by Corollary~\ref{cor:escb}.
Clearly, $V\notin L^{p}_{loc}(\Rd)$.

\begin{example}\label{ex:drugi}
For $d\geq 3$, $n=2,3,\ldots$, let $V_n(x)=|x_1|^{-1+1/n}\ind_{|x_1|<1} \ind_{|x_2|<1}$, where $x=(x_1,x_2)$, $x_1\in\RR$, $x_2\in \RR^{d-1}$. Let $a_n=\sup_{t>0,\, x,y\in\Rd} S(V_n,t,x,y)$,
$$
V(x)=-\sum_{n=2}^{\infty} \frac1{n^2}\frac{V_n(x)}{a_n}\,, \quad x\in \Rd.
$$
Then
\eqref{est:sharp_uni} holds but $V\notin \bigcup_{p> 1} L^p_{loc}(\Rd)$.
\end{example}
\noindent
Indeed,
$0<a_n<\infty$ by Example~\ref{thm:Ld/2}, and so
$$
\sup_{t>0,\,x,y\in\Rd} S(V,t,x,y)\leq
\sum_{n=2}^{\infty}\frac{1}{n^2} <\infty\,.
$$
This yields the global sharp Gaussian bounds.
Fix  $p>1$. Since the function $x_1\to |x_1|^{-1+1/n}$ is not in $L^p(-1,1)$ for large $n$, we get that $V\notin L^p(B(0,1))$.

\begin{example}\label{ex:czwarty}
Let $d\ge 3$ and $V(x_1,x_2)=\frac{-1}{(|x_2|+1)^3}$ for $x_1\in\RR^{d-3}$, $x_2\in \RR^{3}$.
Then \eqref{est:sharp_uni} holds but $V\notin L^1(\RR^d)$.
\end{example}
\noindent
Indeed, $V\notin L^1(\RR^d)$.
We let $V_2(x_2)=\frac{-1}{(|x_2|+1)^3}$, $x_2\in \RR^3$. By the symmetric rearrangement inequality \cite[Chapter~3]{MR1817225},
in dimension $d=3$ we have
\begin{align*}
0\le \Delta^{-1}V_2
\le C_{3}\int_{\RR^3} \frac{1}{|z|(|z|+1)^{3}} \,dz <\infty\,.
\end{align*}
By \eqref{eq:cSK0} and \eqref{eq:d_3},
$$
\sup_{t>0,\,x_2,y_2\in\RR^3} S(V_2,t,x_2,y_2)<\infty.
$$
By Corollary~\ref{rem:prod} and Lemma~\ref{cor:gen_neg} we see that \eqref{est:sharp_uni} holds for $V$.

\begin{proof}[Proof of Proposition~\ref{cor:ce}]
Add the functions from
Example~\ref{ex:drugi} and 
~\ref{ex:czwarty}.
\end{proof}
We can have nonnegative examples, too. Namely, let $V\le 0$ be as in Proposition~\ref{cor:ce}.
Then $M=\sup_{t>0,x,y\in \Rd}S(V,t,x,y)<\infty$.
We
let $\tilde{V}= |V|/(M+1)$.
Then $\tilde{V}\ge 0$, $\tilde{V}\notin L^1(\Rd)\cup \bigcup_{p>1}L^p_{loc}(\Rd)$
and
$$
\sup_{t>0,\,x,y\in\Rd} S(\tilde{V},t,x,y)=M/(M+1)<1\,.
$$
Therefore \eqref{gen_est} holds for $\tilde{V}$ with $h=\infty$ and $\eta=M/(M+1)$, which  yields \eqref{est:sharp_uni}.

Let $d_1,d_2\in \NN$, $d=d_1+d_2$, $V_1\colon \RR^{d_1}\to \RR$, $V_2\colon\RR^{d_2}\to \RR$,
and $V(x_1,x_2)=V_1(x_1)+V_2(x_2)$, where $x_1\in \RR^{d_1}$ and $x_2\in \RR^{d_2}$.
Let $G_1(t,x_1,y_1)$, $G_2(t,x_2,y_2)$ be the 
Schr\"odinger perturbations of the Gauss-Weierstrass kernels on $\RR^{d_1}$ and $\RR^{d_2}$ by $V_1$ and $V_2$, respectively.
Then
$G(t,(x_1,x_2),(y_1,y_2)):= G_1(t,x_1,y_1) \allowbreak G_2(t,x_2,y_2)$ is the Schr\"odinger perturbation of the Gauss-Weierstrass kernel  on $\RR^d$ by $V$.
Clearly, if 
the sharp Gaussian estimates hold for $G_1$ and $G_2$, then they hold for $G$.
Our next example 
shows that the situation
is quite different for tensor products.

\begin{example}\label{ex:nfS2}
Let $V(x_1,x_2)= V_1(x_1)V_2(x_2)$, where $x_1,x_2\in \RR^3$, 
$$
V_1(x)=V_2(x)=-\frac{1-\varepsilon}{2}\ |x|^{-1-\varepsilon}\ \ind_{|x|<1}\,,
$$
and $\varepsilon\in [0,1)$.
Then the heat kernels in $\RR^3$ of
$\Delta+V_1$ and $
\Delta+V_2$ satisfy \eqref{est:sharp_uni} and \eqref{est:sharp_time}, but 
that of $
\Delta+V$
in $\RR^6$ 
satisfies neither \eqref{est:sharp_uni} nor \eqref{est:sharp_time}.
\end{example}
\noindent
Indeed, by the symmetric rearrangement inequality \cite[Chapter~3]{MR1817225}, 
\begin{align*}
0\le-\Delta^{-1}  V_1  (x)\leq
-\Delta^{-1} V_1  (0)=
\frac{1-\varepsilon}{8\pi} \int_{\{z\in\RR^3:|z|<1\}}\frac{1}{|z|} |z|^{-1-\varepsilon}\,dz= 1/2, 
\end{align*}
for all $x\in \RR^3$.
Thus, $\|\Delta^{-1}V_1\|_{\infty}=\| \Delta^{-1} V_2\|_{\infty}<\infty$. 
Using the comment following \eqref{eq:d_3},
 we get
\eqref{est:sharp_uni}
for the heat kernels  in $\RR^3$ of 
$\Delta+V_1$ and $
\Delta+V_2$.
However, the heat kernel in $\RR^6$ of
$
\Delta+V$
fails even \eqref{est:sharp_time}.
Indeed,
if we let $T\leq 1$, $a\in\RR^6$, $|a|=1$, and
$c=\int_0^1 p(s,0,a)ds$, then by \cite[Lemma~3.5]{MR1329992},
\begin{align*}
&\int_0^T \int_{\RR^6}  g(s,0,x)|V(x)|\,dxds
\geq
\int_{\{x \in\RR^6:|x|^2\leq T\}} \int_0^T g(s,0,x)ds\,|V(x)|\,dx\\
&\geq
c
\int_{\{x \in\RR^6:|x|^2\leq T\}}  \frac1{|x|^4} |V(x)|\,dx
\\
&\geq c \int_{\{x_1\in\RR^3:|x_1|^2<T/2\}}  |V_1(x_1)|
\int_{\{x_2\in\RR^3:|x_2|^2<T/2\}}
\frac{|V_2(x_2)|}{(|x_1|^2+|x_2|^2)^2} \,dx_2 dx_1
\\
&\geq  \frac{c (1-\varepsilon)}{2} \int_{\{x_1\in\RR^3:|x_1|^2<T/2\}} |V_1(x_1)|
\int_{\{x_2\in\RR^3:|x_2|^2<T/2\}}
\frac{|x_2|^{-1}}{(|x_1|^2+|x_2|^2)^2} \,dx_2 dx_1
\\
&= \frac{c (1-\varepsilon)}{2} \int_{\{x_1\in\RR^3:|x_1|^2<T/2\}}  |V_1(x_1)|
 \frac{\pi T}{|x_1|^2 (T/2+|x_1|^2)}\,dx_1 \\
&=  \pi^2 c\,T (1-\varepsilon)^2 \int_0^{\sqrt{T/2}} \frac{r^{-1-\varepsilon}}{T/2+r^2}\,dr=\infty\,.
\end{align*}
Therefore by  Lemma~\ref{l:b}, 
 \eqref{e:sSbt} fails, and so does \eqref{est:sharp_time},
according to Remark~\ref{rem:Vp}.
Thus, the sharp Gaussian estimates may hold for the Schr\"odinger perturbations of the Gauss-Weierstrass kernels by $V_1$ and $V_2$ but fail for the Schr\"odinger perturbation of the Gauss-Weierstrass kernel by $V(x_1,x_2)=V_1(x_1)V_2(x_2)$. 

In passing we note that the functions $-V_1$, $-V_2$ and $-V$
give a similar counterexample 
with nonnegative factors, because $1/2<1$, cf. \eqref{eq:mn1}.
Let us also remark that the sharp global Gaussian estimates may 
hold for $V(x_1,x_2)=V_1(x_1)V_2(x_2)$ but fail for $V_1$ or $V_2$. 
Indeed, it suffices to consider $V_1(x_1)=-\ind_{|x_1|<1}$ on $\RR^3$ and $V_2\equiv 1$ on $\RR$, and to apply Theorem~\ref{prop:most_extended}. We see that it is  indeed the combined effect of the factors $V_1$ and $V_2$ that matters--as captured in Section~\ref{sec:gen_app}.

\section*{Acknowledgement}
Krzysztof Bogdan was supported by the Polish National Science Center
(Narodowe Centrum Nauki, NCN) grant 2014/14/M/ST1/00600.
Jacek Dziuba{\'n}ski was supported by the NCN grant DEC-2012/05/B/ST1/00672.
Karol Szczypkowski was partially supported by IP2012 018472
and by the German Science Foundation (SFB 701). 
We thank the referee for insightful comments and suggestions, which largely shaped the paper. In particular 
the paper merges the results of  two preprints \cite{2015arXiv151107167B} and \cite{2016arXiv160603745B},
and the proof of Theorem~\ref{thm:J_0} is much shorter than the elementary
arguments given in \cite{2016arXiv160603745B}.

\def\polhk#1{\setbox0=\hbox{#1}{\ooalign{\hidewidth
  \lower1.5ex\hbox{`}\hidewidth\crcr\unhbox0}}}
  \def\polhk#1{\setbox0=\hbox{#1}{\ooalign{\hidewidth
  \lower1.5ex\hbox{`}\hidewidth\crcr\unhbox0}}}

\end{document}